\newtheorem{thm}{Theorem}[section]
\newtheorem{prop}[thm]{Proposition}
\newtheorem{theorem}[thm]{Theorem}
\newtheorem{lem}[thm]{Lemma}
\newtheorem{lema}[thm]{Lemma}
\newtheorem{cor}[thm]{Corollary}
\theoremstyle{definition}
\newtheorem{definition}[thm]{Definition}
\newtheorem{rem}[thm]{Remark}
\newtheorem{exa}[thm]{Example}
\numberwithin{equation}{section}\theoremstyle{plain}
\newcommand{\B}{\mathcal{B}}
\theoremstyle{definition}
\def \Z {\mathbb{Z}}
\def \o {\otimes}
\def \k {\Bbbk}
\def \aa {\alpha}
 \title[Partial actions of a Hopf algebra on its base field]{Partial actions of a Hopf algebra on its base field and the corresponding partial smash product algebra}
\author[Martini, Paques and Silva]{Grasiela Martini, Antonio Paques and Leonardo Duarte Silva}
\address[Martini]{Universidade Federal do Rio Grande, Brazil}
\email{grasiela.martini@furg.br}
\address[Paques]{Universidade Federal do Rio Grande do Sul, Brazil}
\email{paques@mat.ufrgs.br}
\address[Silva]{Universidade Federal do Rio Grande do Sul, Brazil}
\email{leonardoufpel@gmail.com}
\begin{document}

\allowdisplaybreaks

\begin{abstract}
	We introduce the concept of a $\lambda$-Hopf algebra as a Hopf algebra obtained as the partial smash product algebra of a Hopf algebra and its base field, and show that every Hopf algebra is a $\lambda$-Hopf algebra.
	Moreover, a method to compute partial actions of a given Hopf algebra on its base field is developed and, as an application, we exhibit all partial actions of such type for some families of Hopf algebras.
\end{abstract}

\thanks{{\bf MSC 2020:} 16T05, 16T99, 16S40, 16S99,   16W99.}

\thanks{{\bf Key words and phrases:} Partial action, partial smash product algebra, partial matched pair of Hopf algebras, $\lambda$-Hopf algebra.}

\thanks{The third author was partially supported by CNPq, Brazil.}

\maketitle

\tableofcontents

\section{Introduction}
The first notion of a partial action that appeared in the literature was a partial action of groups on algebras within the theory of operator algebras, more precisely in Exel's paper \cite{exel1994circle} whose aim was to describe the structure of suitable $C^*$-algebras with the unitary circle $\mathbb{S}_1$ acting by automorphisms.
In \cite{Dokuchaev_exel}, Dokuchaev and Exel presented the definition of a group acting partially on an algebra, carrying it to a purely algebraic context. From this previous work, the partial group actions raised the attention of many other researchers.
In particular, a Galois theory for commutative ring extensions was developed using partial actions \cite{paques_ferrero_dokuchaev}. This work motivated Caenepeel and Janssen to extend it to the context of Hopf algebras \cite{caenepeel2008partial}.

The notion of a partial action has several applications and extensions in other branches of mathematics, such as groupoids \cite{paques_bagio}, categories \cite{Alvares_Alves_Batista}, and many others, as can be noticed in \cite{Dokuchaev_survey} and references therein.
In the partial Hopf actions theory, examples constitute a fundamental part of its understanding and development.

An interesting example of partial action is given by scalars, that is, a Hopf algebra $H$ acting partially on a $\k$-algebra $A$ via a partial action of $H$ on $\k$: if $\cdot : H \o \k \longrightarrow \k$ is a partial action of $H$ on $\k$, then $H$ acts partially  on the algebra $A$ via $\rightharpoonup: H \o A \longrightarrow A$, $h  \rightharpoonup  a = (h \cdot 1_\k)a$.
Many authors have obtained a characterization for this particular case of partial action, even in more general settings.
For instance, see \cite[Lemma 4.1]{guris},
\cite[Section 3.2]{Alvares_Alves_Batista}
and \cite[Examples 2.3 - 2.5]{glob_twisted_patial_hopf}.
However, even with this characterization, few concrete examples are known.

Actions of Hopf algebras on algebras are applied to construct crossed products.
Given a global action of $H$ on $A$, the corresponding smash product algebra $A \# H$ is the vector space $A \o H$ with the algebra structure involving such an action.
In particular, since the unique global action of $H$ on $\k$ is given by the counit of $H$, the smash product algebra  $\k \# H$ coincides with the algebra $H$, that is, there are no proper subalgebras of $H$ using global actions in this way.
However, in the partial setting, \emph{i.e.} with a partial (not global) action of $H$ on $\k$, the partial smash product algebra  $\underline{\k \# H}$ is a proper subalgebra of $H$.

In this work, we deal with partial smash product algebras of a Hopf algebra $H$ and its base field $\k$. More precisely, we study a special type of partial matched pair of Hopf algebras using the concept of $\lambda$-Hopf algebras.
Aiming this goal partial actions of $H$ on $\k$ are investigated and a lot of examples are provided.

\medbreak

This paper is organized as follows.
First, we recall in Section \ref{sec:preliminaries} some definitions and results about partial actions of a Hopf algebra $H$ on an algebra $A$.
In particular, the partial smash product algebra $\underline{A \# H}$ is presented.
We focus on the particular case when $A$ is the base field $\k$ of $H$.
Some guiding examples are presented and important notations are fixed.
Then, we dedicate Section \ref{Sec:metodo e exemplos} to study partial actions of a Hopf algebra over its base field.
First, some properties and a method to compute such partial actions are developed, then examples are exhibited.
Finally, in Section \ref{sec:lda_Hopf_Alg} we investigate partial smash product algebras. We obtain a characterization of the conditions under which such an algebra results in a Hopf algebra.
The concept of a $\lambda$-Hopf algebra is introduced, some results concerning it are proved and several examples are calculated using the previously computed partial actions.

\subsection*{Conventions}
In this work, we deal only with algebras over a fixed field $\k$. Unadorned $\otimes$ means $\otimes_{\k}$.
We write $G(H) = \{ g \in H\setminus \{0\} \ | \ \Delta(g)=g \o g\}$ for the group of the \emph{group-like elements} of a Hopf algebra $H$.
Given $g,h \in G(H)$, an element $x \in H$ is called a \emph{$(g,h)$-primitive element} if $\Delta(x) = x \o g + h \o x$, and the linear space of all $(g,h)$-primitive elements of $H$ is denoted by $P_{g,h}(H)$.
If no emphasis on the elements $g,h \in G(H)$ is needed, an element $x \in P_{g,h}(H)$ is called simply of a \emph{skew-primitive element}.
Clearly, $\k \{g-h\} \subseteq P_{g,h}(H)$; these are called the trivial ones.
Finally, given two algebras $A, B,$ we recall that there exist natural inclusions $A, B \hookrightarrow A \o B;$ then, for $a \in A$ and $b \in B$, by $a, b \in A \o B$ we mean $a \o 1_B, 1_A \o b \in A \o B$.

\section{Preliminaries}\label{sec:preliminaries}

We recall here some definitions and preliminary results about partial actions of a Hopf algebra $H$ on an algebra $A$.
Some examples are presented and notations are fixed.
For details, we refer to \cite{Alvares_Alves_Batista, enveloping, caenepeel2008partial}.

\medbreak

	A \emph{left partial action of a Hopf algebra $H$ on an algebra $A$} is a linear map $\cdot: H \otimes A \longrightarrow A$, denoted by $\cdot (h \o a) = h \cdot a$, such that:
	\begin{enumerate}
		\item [(i)] $1_H \cdot a=a$;
		\item [(ii)] $h\cdot ab=(h_1\cdot a)(h_2\cdot b)$;
		\item [(iii)] $h\cdot(k\cdot a)=(h_1\cdot 1_A)(h_2k\cdot a)$,
	\end{enumerate}
	for all $h,k\in H$ and $a,b\in A$.
	In this case, $A$ is called a \emph{partial $H$-module algebra}.
	
	A left partial action is \emph{symmetric} if in addition we have
	\begin{enumerate}
		\item [(iv)] $h \cdot ( k \cdot a)=(h_1k \cdot a)(h_2 \cdot 1_A)$,
	\end{enumerate}
	for all $h,k\in H$ and $a\in A$.

\medbreak

Every global action is a partial action.
The definition of a \emph{right partial action} is given analogously.
Since throughout this work we deal only with left partial actions, from now on by a \emph{partial action} we mean a left partial action.

\medbreak

There exists a characterization for partial actions of $H$ on $\k$, where $\k$ is the base field of the Hopf algebra $H$, given by a linear map $\lambda$ of $H^*=Hom_\k(H,\k)$ satisfying some properties.
This map gives rise to an important source of examples for the partial action theory in several settings: partial actions of group algebras on (co)algebras, partial actions of weak Hopf algebras on (co)algebras, and partial actions of Hopf algebras on categories (see  \cite{glob_twisted_patial_hopf, enveloping, hopf_categories, guris} for details).

The characterization given in the next result is more general. In fact, in \cite{guris} the authors suppose that $H$ is a weak Hopf algebra.
However, we present this result with the assumption that $H$ is a Hopf algebra since we deal only with such algebras in this work.
\begin{prop}\cite[Lemma 4.1]{guris}\label{k_mod_alg_parc}
	Let $H$ be a Hopf algebra.
	Then, $\k$ is a partial $H$-module algebra if and only if there exists a linear map $\lambda \in H^*$ such that $\lambda(1_H)=1_\k$ and
	\begin{align}\label{eqn_lda}
		\lambda(h)\lambda(k)=\lambda(h_1)\lambda(h_2k),
	\end{align}
	for all $h, k \in H$.
	Moreover, the corresponding partial action is symmetric if and only if $\lambda$ satisfies the additional condition
	\begin{align}\label{eqn_lda_sim}
		\lambda(h)\lambda(k)=\lambda(h_1k)\lambda(h_2),
	\end{align}
	for all $h, k \in H$.
\end{prop}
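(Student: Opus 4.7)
The key observation is that, because the action $\cdot : H \otimes \k \to \k$ is $\k$-bilinear, it is completely determined by its values on $1_\k$: setting $\lda(h) := h \cdot 1_\k$ produces a linear functional $\lda \in H^*$, and conversely any $\lda \in H^*$ defines a linear map by the formula $h \cdot \aa := \aa\, \lda(h)$ for $h \in H$ and $\aa \in \k$. The proof then amounts to translating each of the axioms (i)--(iv) into a condition on $\lda$ and checking that the translations match \eqref{eqn_lda} and \eqref{eqn_lda_sim}.

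For the forward direction, I would proceed as follows. Axiom (i) reads $\lda(1_H) = 1_H \cdot 1_\k = 1_\k$. For axiom (iii), take $\aa \in \k$; then the left-hand side is $h \cdot (k \cdot \aa) = \aa\,\lda(k)\lda(h)$, while the right-hand side is $(h_1 \cdot 1_\k)(h_2 k \cdot \aa) = \aa\, \lda(h_1)\lda(h_2 k)$, and equating them gives exactly \eqref{eqn_lda}. Axiom (ii) translates to $\lda(h) = \lda(h_1)\lda(h_2)$, but this is not an independent constraint: it is obtained from \eqref{eqn_lda} by setting $k = 1_H$ and using $\lda(1_H) = 1_\k$.

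For the converse, I would define $h \cdot \aa := \aa\,\lda(h)$ and verify axioms (i)--(iii). Axiom (i) is immediate from $\lda(1_H) = 1_\k$. Axiom (iii) is a direct re-reading of \eqref{eqn_lda}. For axiom (ii), one computes $h \cdot (\aa\beta) = \aa\beta\,\lda(h)$ and $(h_1 \cdot \aa)(h_2 \cdot \beta) = \aa\beta\,\lda(h_1)\lda(h_2)$, so it suffices to know $\lda(h) = \lda(h_1)\lda(h_2)$; as noted above, this follows from \eqref{eqn_lda} by specializing $k = 1_H$. The symmetric case is entirely analogous: axiom (iv) translates, after the same bilinearity manipulation, exactly into \eqref{eqn_lda_sim}, and conversely \eqref{eqn_lda_sim} immediately yields (iv) under the correspondence.

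There is no real obstacle here; the only subtlety is noticing that axiom (ii) is not autonomous but is a consequence of \eqref{eqn_lda} (via $k = 1_H$), so that the characterization involves only the one identity \eqref{eqn_lda} rather than also a multiplicativity condition $\lda(h) = \lda(h_1)\lda(h_2)$. Being careful to record this observation is what makes the backward direction go through cleanly.
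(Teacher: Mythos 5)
Your argument is correct. The paper itself gives no proof of this proposition (it is quoted from \cite[Lemma 4.1]{guris}, stated there for weak Hopf algebras), so there is nothing internal to compare against; but your verification is exactly the standard one: identify the action with $\lda(h)=h\cdot 1_\k$, observe that axiom (i) is $\lda(1_H)=1_\k$, that axiom (iii) is precisely \eqref{eqn_lda}, that axiom (iv) is precisely \eqref{eqn_lda_sim}, and — the one genuinely necessary observation, which you make correctly — that the multiplicativity condition $\lda(h)=\lda(h_1)\lda(h_2)$ coming from axiom (ii) is not an extra hypothesis but follows from \eqref{eqn_lda} by taking $k=1_H$.
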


From now on, by a partial action of a Hopf algebra $H$ on its base field $\k$ we mean a linear map $\lambda\in H^*$ such that $\lambda(1_H)=1_\k$ satisfying condition \eqref{eqn_lda}.

Observe that if $\lambda \in H^*$ is a partial action of $H$ on $\k$, then clearly $\lambda$ is an idempotent element in the convolution algebra $H^*$.

\begin{exa}\cite[Example 3.7]{Alvares_Alves_Batista}\label{lda_kg} Let $G$ be a group. Partial actions of $\k G$ on $\k$ are parametrized by subgroups of $G$.
	Indeed, if $N$ is a subgroup of $G$, then $\delta_N$ is a partial action of $\k G$ on $\k$, where $\delta_N: \k G \longrightarrow \k $ is given by $\delta_N (g) = \begin{cases}
	1_\k, & \textrm{if } g \in N \\ 0, & \textrm{if } g \notin N\end{cases},$
	for all $g \in G$.
	Conversely, let $\lambda$ be a partial action of $\k G$ on $\k$. Then, $\lambda(g) \in \{0, 1_\k\}$ and the subset $N = \{g \in G \, | \, \lambda(g) \neq 0 \} = \{g \in G \, | \, \lambda(g) =1_\k \}$ is a subgroup of $G$. Thus, $\lambda=\delta_N$.
\end{exa}

Using the correspondence of the previous example, given $N$ a subgroup of $G$, we denote by $\lambda_N$ the partial action of $\k G$ on $\k$, given as $\lambda_N= \delta_N$.
In particular, $\lambda_G = \varepsilon_{\k G}$ is the global action of $\k G$ on $\k$.

\begin{exa}\cite[Subsection 3.2]{Alvares_Alves_Batista}\label{lda_kg_est}
	Let $G$ be a finite group.
	Then, partial actions of $(\k G)^*$ on $\k$ are in one-to-one correspondence with
	$$\{ N \ | \ N \textrm{ is a subgroup of } G \textrm{ and } char(\k) \nmid |N| \}.$$
	Indeed, let $\{g^* \ | \ g \in G\}$ be the dual basis for the canonical basis $\{g \ | \ g \in G\}$ of $\k G$.
	Then, for each subgroup $N$ of $G$ such that $char(\k) \nmid |N|$, the linear map $\lambda^N : (\k G)^* \longrightarrow \k$ given by
	$\lambda^N (g^*) = \begin{cases}
	1_\k / |N|, & \textrm{ if } g \in N \\ 0, & \textrm{ if } g \notin N\end{cases},$
	for all $g \in G$, is a partial action of $(\k G)^*$ on $\k$.
	Conversely, let $\lambda$ be a partial action of $(\k G)^*$ on $\k$.
	Then, the subset $N = \{g \in G \, | \, \lambda(g^*) \neq 0 \}$ is a subgroup of $G$.
	In this case, it is known that $\lambda(g^*) = 1_\k / |N|$, for all $g \in N$, and so $\lambda = \lambda^N$.
\end{exa}

\begin{exa}\cite[Proposition 4.10]{caenepeel2008partial}\label{lda_Ug} Let $\mathfrak{g}$ be a Lie algebra and $U(\mathfrak{g})$ its universal enveloping algebra.
	Then, every partial $U(\mathfrak{g})$-module algebra $A$ is in fact a global one.
	That is, there is no genuine partial action (\emph{i.e.}, a partial action that is not a global one) of $U(\mathfrak{g})$ on any algebra $A$.
	In particular, if $\lambda$ is a partial action of $U(\mathfrak{g})$ on $\k$, then $\lambda$ is  necessarily the counit of $U(\mathfrak{g})$.
\end{exa}

\begin{exa}\cite[Example 3.8]{Alvares_Alves_Batista}\label{lda_sweedler} Consider $\mathbb{H}_4$ the Sweedler's Hopf algebra. Precisely, $\mathbb{H}_4$ is the algebra over the field $\k$, $char(\k) \neq 2$, generated by the letters $g$ and $x$ satisfying the relations $g^2=1, x^2=0$ and $xg =-gx$.
	The set $\{1, g, x, gx\}$ is a basis for $\mathbb{H}_4$, $g$ is a group-like element and $x$ is a $(1,g)$-primitive element.
	Thus, it is straightforward to check that  for any $\alpha \in \k$ the map $\lambda_\alpha : \mathbb{H}_4 \longrightarrow \k$ given by $\lambda_\alpha (1)=1_\k, \lambda_\alpha(g)=0$ and $\lambda_\alpha(x)=\lambda_\alpha(gx)= \alpha$ is a partial action of $\mathbb{H}_4$ on $\k$.
	Moreover, these are all partial actions of $\mathbb{H}_4$ on $\k$, \emph{i.e.}, if $\lambda$ is a partial action of $\mathbb{H}_4$ on $\k$, then $\lambda=\varepsilon$ (and so the global one) or $\lambda=\lambda_\alpha$ for some $\alpha \in \k$.
\end{exa}

Recall that given any global $H$-module algebra $A$, via an action $\triangleright : H \otimes A \longrightarrow A$, we can endow the tensor product $A \otimes H$ with an algebra structure induced by
$$(a\otimes h)(b\otimes g)=a(h_1 \triangleright b)\otimes h_2g,$$
for $a,b\in A$ and $h,g\in H$. This structure is denoted by $A \# H$ and called \emph{the smash product algebra of $A$ with $H$}.
We typically denote the element $a \otimes h$ by $a \# h \in A \# H$.
In particular, note that $\k \# H \simeq H$.

The same construction can be performed in the partial case. But, in this case, it turns that $A \# H$ is not a unital algebra since $1_A\# 1_H$ is only a left unit.

\begin{definition}\label{algebra_smash} \cite{caenepeel2008partial}
	Let $A$ be a partial $H$-module algebra via $\cdot : H \otimes A \longrightarrow A$. Then, the vector subspace $\underline{ A \# H}=(A\# H)(1_A\# 1_H)$ is a unital algebra with the multiplication induced by
	$$\underline{(x\#h)} \ \underline{(y\#g)}=\underline{x(h_1\cdot y)\# h_2g},$$
	for all $x,y\in A$, $h,g\in H$, where $\underline{(x\#h)} = (x \# h)(1_A \# 1_H) = x(h_1\cdot 1_A)\# h_2$ is a typical element of $\underline{ A \# H}$ and the unit is given by $\underline{1_A\#1_H}$. The unital algebra $\underline{ A \# H}$ is called the \emph{partial smash product algebra of $A$ with $H$}.
\end{definition}

For a partial action of $H$ on $\k$ given by $\lambda: H \longrightarrow \k$, the corresponding partial smash product algebra $\underline{ \k \# H}$ is generated by $\left\{ \underline{ 1_\k \# h} =  1_\k \# \lambda(h_1)h_2 \, | \, h \in H \right\}$.
Also notice $ \underline{ 1_\k \# h} \, \, \underline{ 1_\k \# k} = \underline{ 1_\k \# \lambda(h_1)h_2k}$, for all $h, k \in H$.

For example, let $\lambda_N$ be the partial action of $\k G$ on $\k$ given in Example \ref{lda_kg}.
In this case, it follows that $\underline{ \k \# \k G } \simeq \k N$.

\section{Partial actions of a Hopf algebra on its base field}\label{Sec:metodo e exemplos}

So far we know, only the partial actions on the base field presented in Examples \ref{lda_kg} - \ref{lda_sweedler} are given in the literature.
For this reason, in this section, some properties and a method to compute such partial actions are developed, and then we exhibit these partial actions for some families of Hopf algebras.

\subsection{Properties of a partial action}\label{Subsec:propriedades_lda}

This subsection is dedicated to present some properties of a partial action $\lambda: H \longrightarrow \k$.
Most of the results are about the values that the map $\lambda$ can admit for group-like and skew-primitive elements.

\medbreak

The following proposition generalizes to any Hopf algebra $H$ an important fact observed in Example \ref{lda_kg} just for group algebras.
\begin{prop}\label{N_subgrupo}
	Let $\lambda: H \longrightarrow \k$ be a partial action.
	Then, for any $g \in G(H)$, $\lambda(g) \in \{0, 1_\k\}$ and $N=\{g \in G(H) \, | \, \lambda(g)=1_\k \}$ is a subgroup of $G(H)$.
	In particular, if $g \in N$, then $\lambda(g^i)=1_\k,$ for each $i \in \Z$.
	Moreover, if $g \in G(H)$ has prime order $p$ and $\lambda(g)=0$, then $\lambda(g^i)=0,$ for each $i \in \Z$ such that $gcd(i,p)=1$.
\end{prop}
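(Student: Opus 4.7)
The entire proposition reduces to a single observation: for $g\in G(H)$ one has $\Delta(g)=g\otimes g$, so equation (2.1) with $h=g$ collapses to the \emph{group-like identity}
\[
\lambda(g)\lambda(k) \;=\; \lambda(g)\lambda(gk), \qquad \text{for every } k\in H.
\]
I would derive every assertion from this identity together with the fact that $g\in G(H)$ is invertible in $H$ with $g^{-1}=S(g)\in G(H)$.

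For the first claim, setting $k=1_H$ in the identity gives $\lambda(g)=\lambda(g)^{2}$ in the field $\k$; idempotency forces $\lambda(g)\in\{0,1_\k\}$. To see that $N$ is a subgroup of $G(H)$: it contains $1_H$ by hypothesis; for closure under multiplication take $g,h\in N$ and apply the identity with $k=h$, obtaining $1_\k\cdot 1_\k=1_\k\cdot\lambda(gh)$, so $gh\in N$; for closure under inversion take $k=g^{-1}$, which yields $\lambda(g)\lambda(g^{-1})=\lambda(g)$, so $g\in N$ forces $\lambda(g^{-1})=1_\k$.

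The third assertion is then immediate: once $N$ is a subgroup of $G(H)$, from $g\in N$ one inductively gets $g^{i}\in N$ for every $i\in\Z$, hence $\lambda(g^{i})=1_\k$. For the last assertion I would argue by contradiction. Assume $g\in G(H)$ has prime order $p$, that $\lambda(g)=0$, and that $\lambda(g^{i})\neq 0$ for some $i$ with $\gcd(i,p)=1$. By the first claim $\lambda(g^{i})=1_\k$, so $g^{i}\in N$. Bezout provides $a,b\in\Z$ with $ai+bp=1$, whence $g=(g^{i})^{a}(g^{p})^{b}=(g^{i})^{a}$ is a power of an element of $N$; the third assertion then forces $\lambda(g)=\lambda((g^{i})^{a})=1_\k$, contradicting $\lambda(g)=0$.

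I do not anticipate any real obstacle here: once the group-like simplification of (2.1) is noted, every step is an elementary manipulation in the field $\k$ and in the group $G(H)$. The only mild subtlety worth flagging is the tacit use of the antipode to regard $g^{-1}$ as an element of $H$ so that it may be substituted for $k$ in the identity; this is what prevents the argument from requiring any ``symmetric partial action'' hypothesis, the identity (2.1) alone being enough.
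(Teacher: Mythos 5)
Your proposal is correct and follows essentially the same route as the paper: the group-like collapse of equation \eqref{eqn_lda}, idempotency via $k=1_H$, the same three substitutions to verify that $N$ is a subgroup, and powers of $g$ lying in $N$. The only cosmetic difference is in the last assertion, where you invoke Bezout to show $g^i\in N$ would force $g\in N$, while the paper observes that $N\cap\langle g\rangle$ is a proper subgroup of the prime-order cyclic group $\langle g\rangle$ and hence trivial; these are the same elementary fact in two guises.
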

\begin{proof}
	First, for any $g \in G(H)$, $\lambda(g)\lambda(1_H)=\lambda(g)\lambda(g 1_H)$ by \eqref{eqn_lda}.
	Since $\lambda(1_H)=1_\k$, it follows that $\lambda(g)=\lambda(g)^2$ and so $\lambda(g) \in \{0, 1_\k\}$.
	Now, clearly $1_H \in N$.
	Let $g,h \in N$, then
	we obtain that $\lambda(h)\lambda(h^{-1})=\lambda(h)$ by \eqref{eqn_lda},
	and consequently $h^{-1} \in N$.
	Again, using \eqref{eqn_lda}, we have that $\lambda(g)\lambda(h)=\lambda(g)\lambda(gh)$, and hence $gh \in N$.
	Thus, $N$ is a subgroup of $G(H)$.
	If $g \in N$, then clearly $\lambda(g^i)=1_\k$, since $\langle g \rangle \subseteq N$, where $\langle g \rangle$ stands for the subgroup generated by $g$.
		
	Now suppose that $g^p=1$ and $\lambda(g)=0$, where $p \in \Z$ is a prime number.
	In this case, $g \notin N$ and $\langle g \rangle = \{1, g, g^2, ..., g^{p-1} \}$. Consequently,
	we have that $N \cap \langle g \rangle = \{1\}$ and thus $\lambda(g^i)=0$, whenever $gcd(i,p)=1$.
\end{proof}

The next result shows the behavior of a partial action $\lambda: H \longrightarrow \k$ for certain elements.

\begin{lema}\label{propriedades_lda}
	Let $\lambda: H \longrightarrow \k$ be a partial action.
	For $g,t \in G(H)$ and $x \in P_{g,t}(H)$, it holds that:
	\begin{itemize}
		\item[a)]  If $\lambda(g)=1$, then $\lambda(gu)=\lambda(u)$, for all $u \in H$;
		\item[b)]  If $\lambda(g)=\lambda(t)$, then $\lambda(x)=0$;
		\item[c)]  If $\lambda(x)=0$ and $\lambda(t)=1$, then $\lambda(xu)=0$, for all $u \in H$;
		\item[d)] If $\lambda(g)=1$ and $\lambda(t)=0$, then $\lambda(xt^{-1})=-\lambda(x)$;
		\item[e)] If $\lambda(g)=0$ and $\lambda(t)=1$, then $\lambda(xg^{-1})=-\lambda(x)$.
	\end{itemize}
	Moreover, if $gt=tg$, then, for all $i \in \Z$, we also have that:
	\begin{itemize}
		\item[f)]  If $\lambda(g)=1$ and $\lambda(t)=0$, then $\lambda(t^{i}x)=0 $ or $\lambda(t^i)+\lambda(t^{i+1})=1$;
		\item[g)]  If $\lambda(g)=0$ and $\lambda(t)=1$, then  $\lambda(g^{i}x)=0 $ or $\lambda(g^i)+\lambda(g^{i+1}) =1$;
		\item[h)] If $\lambda(g)=\lambda(t)=0$, then $\lambda(g^{-1}t^{-1}x)=0$.
	\end{itemize}
\end{lema}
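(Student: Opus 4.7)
The overall strategy is to repeatedly apply the defining identity $\lda(h)\lda(k)=\lda(h_1)\lda(h_2 k)$ to carefully chosen inputs, together with Proposition \ref{N_subgrupo}, which gives $\lda(g)\in\{0,1_\k\}$ for every $g\in G(H)$. A preliminary fact I would establish first is that $\lda(g)=0$ implies $\lda(g^{-1})=0$: applying the identity to $h=g^{-1}$, $k=g$ yields $\lda(g^{-1})\lda(g)=\lda(g^{-1})\lda(1)=\lda(g^{-1})$, so $\lda(g^{-1})(1-\lda(g))=0$; combined with $\lda(g)\in\{0,1\}$, this forces $\lda(g^{-1})=0$ when $\lda(g)=0$.

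For (a), take $h=g$, $k=u$ to get $\lda(g)\lda(u)=\lda(g)\lda(gu)$, and cancel $\lda(g)=1$. For (b), take $h=x$, $k=1_H$; since $\Delta(x)=x\otimes g+t\otimes x$, the identity becomes $\lda(x)=\lda(x)\lda(g)+\lda(t)\lda(x)$, i.e.\ $\lda(x)(1-\lda(g)-\lda(t))=0$. The hypothesis $\lda(g)=\lda(t)\in\{0,1\}$ makes the factor $1-2\lda(g)$ nonzero, forcing $\lda(x)=0$. For (c), take $h=x$, $k=u$ to obtain $\lda(x)\lda(u)=\lda(x)\lda(gu)+\lda(t)\lda(xu)$; using $\lda(x)=0$ and $\lda(t)=1$ gives $\lda(xu)=0$.

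Parts (d) and (e) are the ones where I would have to be a bit more careful. For (e) the direct choice $h=x$, $k=g^{-1}$ works cleanly: the identity gives $\lda(x)\lda(g^{-1})=\lda(x)\lda(gg^{-1})+\lda(t)\lda(xg^{-1})=\lda(x)+\lda(xg^{-1})$, and the left side vanishes since $\lda(g^{-1})=0$. For (d) the symmetric choice $h=x$, $k=t^{-1}$ collapses to $0=0$ and gives no information; the trick I would use is instead $h=xt^{-1}$, $k=t$. Computing $\Delta(xt^{-1})=xt^{-1}\otimes gt^{-1}+1\otimes xt^{-1}$ and applying the identity yields $\lda(xt^{-1})\lda(t)=\lda(xt^{-1})\lda(g)+\lda(x)$, which simplifies to $\lda(xt^{-1})=-\lda(x)$.

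For (f), (g), (h) I would exploit the commutativity $gt=tg$ so that $t^i x$ and $g^i x$ have computable coproducts. Concretely, $\Delta(t^i x)=t^i x\otimes t^i g+t^{i+1}\otimes t^i x$, so (a) gives $\lda(t^i g)=\lda(gt^i)=\lda(t^i)$, and applying the defining identity with $h=t^i x$, $k=1_H$ produces $\lda(t^i x)\bigl(1-\lda(t^i)-\lda(t^{i+1})\bigr)=0$, yielding the dichotomy in (f); part (g) is entirely analogous with $g$ and $t$ swapped. Finally, (h) reduces to (b): under $gt=tg$ one computes $\Delta(g^{-1}t^{-1}x)=g^{-1}t^{-1}x\otimes t^{-1}+g^{-1}\otimes g^{-1}t^{-1}x$, so $g^{-1}t^{-1}x\in P_{t^{-1},g^{-1}}(H)$, and since $\lda(g^{-1})=\lda(t^{-1})=0$ by the preliminary fact, (b) applies. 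The main obstacle I anticipate is simply spotting the nonsymmetric choice $h=xt^{-1}$, $k=t$ for (d), since all the ``obvious'' symmetric choices degenerate.
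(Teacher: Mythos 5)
Your proposal is correct, and it takes the same route the paper intends: the paper's own proof is a one-line remark that the claims follow from applying the identity \eqref{eqn_lda} to ``good choices'' of $h,k$ together with Proposition \ref{N_subgrupo}, and your choices (including the non-obvious $h=xt^{-1}$, $k=t$ for part (d) and the reduction of (h) to (b) via $g^{-1}t^{-1}x\in P_{t^{-1},g^{-1}}(H)$) supply exactly the details the paper omits. All computations check out, including the observation that the factor $1-2\lda(g)$ in (b) is nonzero in any characteristic.
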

\begin{proof}
	It follows directly from the application of equality \eqref{eqn_lda} for good choices of elements $h, k \in H$ and Proposition \ref{N_subgrupo}.
\end{proof}

\subsection{About the computations of a partial action of $H$ on $\k$}\label{Subset:metodo}

Let $H$ be a Hopf algebra.
In this subsection, a method to compute partial actions of $H$ on its base field is given.

\medbreak

Recall by Proposition \ref{k_mod_alg_parc} that $\lambda \in H^*$ is a partial action of $H$ on $\k$ if and only if the map $\lambda$ satisfies $\lambda(1_H)=1_\k$ and $\lambda(u)\lambda(v)= \lambda(u_1)\lambda(u_2v)$, for all $u, v \in H$.
\begin{definition}
Let $\mathcal{B}$ be a basis of $H$ and consider the linear map $\Lambda: H \longrightarrow \k [ X_b \ | \ b \in \B ]$, $\Lambda(b) = X_b$.
	We define by \emph{partial system associated with $\B$} the following system of equations:
\begin{align}\label{sistema_parcial_associado}
	\left\{ \begin{array}{ll}\Lambda(1_H) = 1_\k \\
		\Lambda(a)\Lambda(b)= \Lambda(a_1)\Lambda(a_2 b)  \end{array}\right. \qquad a, b \in \B.
\end{align}
\end{definition}
Hence, the map $\lambda : H \longrightarrow \k$ is a partial action of $H$ on $\k$ if and only if $(\lambda(b))_{b \in \B}$ is a solution of \eqref{sistema_parcial_associado}.
In particular, the partial system associated with $\B$ always has at least one solution:
the global action of $H$ on $\k$, namely $(\varepsilon(b))_{b \in \B}$.

\medbreak

Now, extend $G(H)$ to a basis $\B = G(H) \sqcup \B'$ of $H$, where $\sqcup$ denotes the disjoint union.
Then, \eqref{sistema_parcial_associado} is rewritten as
\begin{align*}
\left\{\begin{array}{ll} \Lambda(1_H)= 1_\k \\
\Lambda(g)\Lambda(v)= \Lambda(g)\Lambda(g v) \\
\Lambda(u)\Lambda(v)= \Lambda({u}_1)\Lambda({u}_2 v) \end{array}\right. \qquad g \in G(H), u \in \B', v \in \B.
\end{align*}

Let $\lambda:H \longrightarrow \k$ be a partial action.
Proposition \ref{N_subgrupo} ensures that $\lambda(h) \in \{0, 1_\k\}$, for all $h \in G(H)$, and there exists a subgroup $N$ of $G(H)$ such that $\lambda |_{\k G(H)} = \lambda_N$ (see Example \ref{lda_kg}).
Then, by Lemma \ref{propriedades_lda} (a), we have that $\lambda(v) = \lambda(g v)$ for all $g \in N, v \in \B$.
Therefore, $(\lambda(b))_{b \in \B}$ is a solution of the following system:
\begin{align}\label{sistema_condicao_inicial}
\left\{\begin{array}{ll} \Lambda(g) = 1_\k \\
\Lambda(h) = 0 \\
\Lambda(u) = \Lambda(g u) \\
\Lambda(u)\Lambda(v)= \Lambda({u}_1)\Lambda({u}_2 v) \end{array}\right. \qquad g \in N, h \in G(H) \setminus N, u \in \B', v \in \B.
\end{align}

On the other hand, let $N$ be a subgroup of $G(H)$ and consider a system of equations as \eqref{sistema_condicao_inicial}.
If there exists a solution $(\alpha_b)_{b \in \B}$ to the latter system, then it is also a solution of the partial system associated with $\B$.

\medbreak

Fix a subgroup $N$ of $G(H)$.
For $x, y \in \B'$, define the relation
$$ x \sim_N y  \quad \textrm{ if and only if} \quad  \exists  \, \,  g \in N  \textrm{ such that } y = gx.$$
Notice that it is an equivalence relation on the set $\B'$.
Given $ x \in \B'$, $[x] = \{y \in \B' \ | \ y \sim_N x \}$ denotes the \emph{equivalence class of $x$}.
In particular, for any $x \in \B'$, $[x] = \{x\} \sqcup [x]^{\perp}$, where $[x]^{\perp} = \{y \in \B' \ | \ y \sim_N x, y \neq x \}$.

Write $\widetilde{N}$ for a \emph{transversal set of the relation $\sim_N$ on $\B'$}, that is, $\widetilde{N}$ is a subset of $\B'$ consisting of exactly one representative from each equivalence class.
Then,  we have a partition of $\B'$ given by $\B' = \widetilde{N} \sqcup N^{\perp}$, where $N^{\perp}=\B' \setminus \widetilde{N}$.
Observe that $N^{\perp} = \cup_{x \in \widetilde{N}} [x]^{\perp}$.

Consider now the following system of equations:
\begin{align}\label{sistema_condicao_inicial_red}
\left\{\begin{array}{ll} \Lambda(g) = 1_\k \\
\Lambda(h) = 0 \\
\Lambda(u) = \Lambda(g u)\end{array}\right.
\qquad g \in N, h \in G(H) \setminus N, u \in \B'.
\end{align}
Then, $(\alpha_b)_{b \in \B}$ is a solution of \eqref{sistema_condicao_inicial_red} if and only if $\alpha_g= 1_\k$, $\alpha_h= 0$, $\alpha_y=\alpha_x$, and $\alpha_x \in \k$ is a free parameter, for all $g \in N$, $h \in G(H)\setminus N$, $x \in \widetilde{N}$ and $y \in [x]$.
In this case, we say that $(\alpha_b)_{b \in \B}$ is an \emph{initial $N$-condition for the partial system associated with $\B$}.

Finally, we define the following sets: $\B_{t,s} = \left\{ x \in P_{t,s}(H) \ | \ t \in N,  s \in G(H) \setminus N \right\}$ and
$\widetilde{\B} = \widetilde{N} \setminus \left(\cup_{t,s \in G(H)} \B_{t,s}\right).$
With these notations, the system
\begin{align}\label{N_reduzido}
\left\{\begin{array}{ll}
\Lambda(u)\Lambda(v)= \Lambda({u}_1)\Lambda({u}_2 v)
\end{array}\right. \qquad u \in \widetilde{\B}, v \in \B,
\end{align}
is called of the \emph{$N$-reduced partial system associated with $\B$}.

\begin{thm}\label{acao_e_sistema}
	Let $\B = G(H) \sqcup \B'$ be a basis of $H$ and $\lambda: H \longrightarrow \k$ a linear map.
	Then, $\lambda$ is a partial action of $H$ on $\k$ if and only if $N=\{g \in G(H) \ | \ \lambda(g)=1_\k\}$ is a subgroup of $G(H)$ and $(\lambda(b))_{b \in \B} $ is both an initial $N$-condition for the partial system associated with $\B$ and also a solution of the $N$-reduced partial system associated with $\B$.
\end{thm}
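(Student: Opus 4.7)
The plan is to handle both directions by unpacking the definitions against \eqref{eqn_lda}. For the forward direction, Proposition \ref{N_subgrupo} immediately yields that $N$ is a subgroup of $G(H)$ and that $\lda(g) = 1_\k$ for $g \in N$, $\lda(h) = 0$ for $h \in G(H)\setminus N$. Lemma \ref{propriedades_lda}(a), applied to $u \in \B'$ and $g \in N$, gives $\lda(u) = \lda(gu)$, so $(\lda(b))_{b \in \B}$ satisfies every equation of the initial $N$-condition. Restricting \eqref{eqn_lda} to pairs $u \in \widetilde{\B}$, $v \in \B$ shows the same tuple solves the $N$-reduced system.

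For the backward direction, by bilinearity it suffices to verify $\lda(u)\lda(v) = \lda(u_1)\lda(u_2v)$ for $u, v \in \B$. A preliminary step I would carry out is to promote the initial $N$-condition to the identity $\lda(gw) = \lda(w)$ for every $g \in N$ and every $w \in H$: decomposing $w$ along $\B = G(H) \sqcup \B'$, the $G(H)$-part is handled by $N$ being a subgroup (so $gv \in N \iff v \in N$), and the $\B'$-part is handled by $gy \in [y]$ for $y \in \B'$.

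The remainder is a case analysis on $u \in \B$. If $u = g \in G(H)$, the identity becomes $\lda(g)\lda(v) = \lda(g)\lda(gv)$, which is trivial when $\lda(g) = 0$ and follows from $\lda(gv) = \lda(v)$ when $g \in N$. If $u \in \widetilde{\B}$, the identity is exactly one of the defining equations of the $N$-reduced system. If $u \in \widetilde{N}$ lies in some $\B_{t,s}$ with $t \in N$ and $s \in G(H) \setminus N$, then $\Delta(u) = u \o t + s \o u$ gives $\lda(u_1)\lda(u_2 v) = \lda(u)\lda(tv) + \lda(s)\lda(uv) = \lda(u)\lda(v)$, using $\lda(s) = 0$ and $\lda(tv) = \lda(v)$. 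Finally, if $u \in \B' \setminus \widetilde{N}$, write $u = gx$ with $g \in N$ and $x \in \widetilde{N}$; multiplicativity of $\Delta$ yields $\Delta(u) = gx_1 \o gx_2$, and applying $\lda(g\cdot) = \lda(\cdot)$ to each tensor factor reduces the identity for $(u,v)$ to the identity for $(x, v)$, already covered by one of the three previous cases.

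The main obstacle I anticipate is bookkeeping rather than computation: one must verify that $\widetilde{\B}$ has been carved out so as to exclude precisely the basis elements (those in $\B_{t,s}$) whose partial-action equations become automatic consequences of the coproduct formula together with the initial $N$-condition, and that the class-representative reduction $u = gx \mapsto x$ is legitimate regardless of the coproduct of $x$. Once the extended relation $\lda(gw) = \lda(w)$ is in hand, both points become mechanical.
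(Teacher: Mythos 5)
Your proposal is correct and follows essentially the same route as the paper's proof: the forward direction via Proposition \ref{N_subgrupo} and Lemma \ref{propriedades_lda}(a), and the converse via the extension $\lda(gw)=\lda(w)$ for all $w\in H$ together with a case analysis splitting $u\in\B$ into $G(H)$, $\widetilde{\B}$, $\B_{t,s}\cap\widetilde{N}$ and $N^{\perp}$, with identical computations in each case. The only (cosmetic) difference is that you treat the $N^{\perp}$ case last, so the reduction $u=gx\mapsto x$ never appeals to a case not yet established, whereas the paper handles $N^{\perp}$ first and quotes the reduced system for a representative $x$ that may in fact lie in some $\B_{t,s}$ and is only dealt with afterwards.
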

\begin{proof}
	If $\lambda$ is a partial action of $H$ on $\k$, then we have that $N$ is a subgroup of $G(H)$ by Proposition \ref{N_subgrupo}, and $(\lambda(b))_{b \in \B}$ is a solution of the system \eqref{sistema_condicao_inicial}. Then, clearly $(\lambda(b))_{b \in \B}$ is both: an initial $N$-condition for the partial system associated with $\B$ and a solution of the $N$-reduced partial system associated with $\B$.
	
	For the converse, it is sufficient to check that $(\lambda(b))_{b \in \B}$ satisfies each equation $ \Lambda(u)\Lambda(v)= \Lambda({u}_1)\Lambda({u}_2 v)$, for $u \in \B' \setminus \widetilde{\B}, v \in \B.$ Then, in this case, $(\lambda(b))_{b \in \B}$ is also a solution of the system given in \eqref{sistema_condicao_inicial} and therefore $\lambda$ is a partial action.
	
	First, suppose $u \in N^{\perp}.$
	Note that $u \in [x]^{\perp}$ for some $x \in \widetilde{N}$, and so there exists $g \in N$ such that $u = gx$.
	Then,
	$ \Lambda(u)\Lambda(v)= \Lambda({u}_1)\Lambda({u}_2 v)$ means $\Lambda(gx)\Lambda(v) = \Lambda(gx_1)\Lambda(gx_2v)$.
	Since $(\lambda(b))_{b \in \B}$ is an initial $N$-condition for the partial system associated with $\B$, we have $\lambda(gw)=\lambda(w)$ for any $w \in H$, and
	the equality $\lambda(x)\lambda(v) = \lambda(x_1)\lambda(x_2v)$  holds because $(\lambda(b))_{b \in \B}$ is solution of \eqref{N_reduzido}.
	Hence,
	$$\lambda(gx)\lambda(v) = \lambda(x)\lambda(v) = \lambda(x_1)\lambda(x_2v) = \lambda(gx_1)\lambda(gx_2v).$$

	Finally, if $ u \in \B_{t,s} \cap \widetilde{N}$, then the equation $\Lambda(u)\Lambda(v) = \Lambda(u_1)\Lambda(u_2v)$ is the same as $\Lambda(u)\Lambda(v) = \Lambda(u)\Lambda(tv) + \Lambda(s)\Lambda(uv)$, for any $v \in \B$.
	Since $(\lambda(b))_{b \in \B}$ is a solution of \eqref{sistema_condicao_inicial_red}, then $\lambda(v) = \lambda(tv)$ and $\lambda(s)=0$.
	Thus, the equality $\lambda(u)\lambda(v) = \lambda(u)\lambda(tv) + \lambda(s)\lambda(uv)$ holds.
\end{proof}

\medbreak

Similarly to Lemma \ref{propriedades_lda}, we have the following result.
\begin{lem}\label{prop_analogas_lda}
	Let $\B = G(H) \sqcup \B'$ be a basis of $H$, $g, t \in G(H)$, $x \in P_{g,t}(H) \cap \B'$ and $(\alpha_b)_{b \in \B}$ an initial $N$-condition for the partial system associated with $\B$.
	If $(\alpha_b)_{b \in \B}$ is also a solution of the $N$-reduced partial system associated with $\B$, then:
	\begin{itemize}
		\item[a)] If $\alpha_g = \alpha_t$, then $\alpha_x=0$;
		\item[b)] If $\alpha_t = 1_\k$ and $\alpha_x=0$, then $\alpha_{xu}=0$ for all $u \in H$ such that $xu \in \B'$;
		\item[c)] If $\alpha_g = 1_\k$, $\alpha_t = 0$ and $xt^{-1} \in \B'$, then $\alpha_{xt^{-1}}=-\alpha_{x}$;
		\item[d)] If $\alpha_g = 0$, $\alpha_t = 1_\k$ and $xg^{-1} \in \B'$, then $\alpha_{xt^{-1}}=-\alpha_{x}$.
	\end{itemize}
\end{lem}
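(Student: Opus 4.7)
The plan is to upgrade the combinatorial data $(\alpha_b)_{b \in \B}$ to an honest partial action, then quote Lemma \ref{propriedades_lda} term by term. The two hypotheses on $(\alpha_b)_{b \in \B}$—being an initial $N$-condition for the partial system associated with $\B$ and being a solution of the $N$-reduced partial system associated with $\B$—are exactly those required by the converse direction of Theorem \ref{acao_e_sistema}. Hence the linear map $\lda: H \longrightarrow \k$ defined on the basis $\B$ by $\lda(b) := \alpha_b$, and extended linearly, is a partial action of $H$ on $\k$; moreover, by the first two lines of the initial $N$-condition, the set $\{g \in G(H) \ | \ \lda(g) = 1_\k\}$ coincides with $N$.

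Once $\lda$ is in hand, each item of the statement is a direct translation of the corresponding part of Lemma \ref{propriedades_lda} applied to the $(g,t)$-primitive element $x$. Item (a) follows from Lemma \ref{propriedades_lda} (b); item (b) from Lemma \ref{propriedades_lda} (c); item (c) from Lemma \ref{propriedades_lda} (d); and item (d) from Lemma \ref{propriedades_lda} (e). In every case, the basis-membership hypotheses $xu, xt^{-1}, xg^{-1} \in \B'$ ensure that the values $\alpha_{xu}, \alpha_{xt^{-1}}, \alpha_{xg^{-1}}$ are genuine entries of the tuple $(\alpha_b)_{b \in \B}$, so that the partial-action identities $\lda(xu) = 0$, $\lda(xt^{-1}) = -\lda(x)$, $\lda(xg^{-1}) = -\lda(x)$ transcribe verbatim into the desired equalities among the $\alpha_b$'s.

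The argument is thus essentially a bookkeeping reduction, with no calculation beyond what has already been done in Lemma \ref{propriedades_lda}. The main obstacle, if any, is purely notational: one must carefully match the indices appearing in the initial $N$-condition—in particular the identifications $\alpha_{gu} = \alpha_u$ for $g \in N$—with the elements $g$ and $t$ in the skew-primitive structure $\Delta(x) = x \o g + t \o x$, so as to correctly identify which instance of Lemma \ref{propriedades_lda} to invoke and to confirm that the conclusion makes sense as an equality of coordinates of the tuple.
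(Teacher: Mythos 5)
Your proposal is correct and follows exactly the route the paper intends: invoke the converse direction of Theorem \ref{acao_e_sistema} to promote $(\alpha_b)_{b\in\B}$ to a partial action $\lda$ with $\lda(b)=\alpha_b$, then read off items (a)--(d) from Lemma \ref{propriedades_lda} (b)--(e), using the basis-membership hypotheses to identify $\lda(xu)$, $\lda(xt^{-1})$, $\lda(xg^{-1})$ with coordinates of the tuple. Your reading of item (d) also correctly resolves the paper's apparent typo (the conclusion should be $\alpha_{xg^{-1}}=-\alpha_x$, matching Lemma \ref{propriedades_lda}~(e)).
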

\begin{proof}
		Straightforward from Theorem \ref{acao_e_sistema} and Lemma \ref{propriedades_lda}.
\end{proof}

\medbreak
	
	Given a partial action $\lambda: H \longrightarrow \k$, we say that $\lambda$ has \emph{initial condition $N$} if $N=\{ g \in G(H) \ | \ \lambda(g)=1_\k \}$.

\medbreak

Now, based on Theorem \ref{acao_e_sistema}, we present our method to calculate partial actions of $H$ on $\k$.

\smallbreak

\textbf{\underline{The Method:}}
Let $H$ be a given Hopf algebra.
To obtain a partial action of $H$ on $\k$, we perform the following steps:
\begin{itemize}
	\item[\textbf{Step 1.}] Extend $G(H)$ to a basis $\B$ of $H$;
	\item[\textbf{Step 2.}] Consider $N$ a subgroup of $G(H)$;
	\item[\textbf{Step 3.}] Regard the solutions $(\alpha_b)_{b \in \B}$ of the system \eqref{sistema_condicao_inicial_red};
	\item[\textbf{Step 4.}] Investigate the $N$-reduced partial system associated with $\B$;
	\item[\textbf{Step 5.}] \textbf{Conclusion:} If $(\alpha_b)_{b \in \B}$ is a solution of the $N$-reduced partial system associated with $\B$, then the linear map $\lambda: H \longrightarrow \k$ given by $\lambda(b)=\alpha_b$, for all $b \in \B$, is a partial action of $H$ on $\k$;
	Otherwise, there is not a partial action of $H$ on $\k$ with initial condition $N$.
\end{itemize}

\medbreak

\textbf{Remarks. 1)} For Step 4, Lemma \ref{prop_analogas_lda} is useful to determine if $(\alpha_b)_{b \in \B}$ can be a solution of \eqref{N_reduzido} or not, since it imposes necessary conditions on the values $\alpha_u \in \k$ for some elements $u \in \widetilde{N}$.

\textbf{2)} Since every partial action of $H$ on $\k$ has an initial condition, if we apply the method to all subgroups of $G(H)$, then we obtain all partial actions of $H$ on $\k$.
Moreover, the method gives an explicit characterization of each partial action $\lambda: H \longrightarrow \k$.

\medbreak

\begin{exa}\label{metodo_sweedler}
	To illustrate the method, we reobtain the partial actions of the Sweedler's Hopf algebra $\mathbb{H}_4$ on its base field, previously exhibited in Example \ref{lda_sweedler}.
	Recall that $G(\mathbb{H}_4) = \{1,g \}$ and $\mathcal{B} = \{1,g,x,gx \}$ is a basis of $\mathbb{H}_4$.
	Thus, we have two subgroups of $G(H)$, namely $N=\{1, g\}$ and $N=\{1\}$.
	\begin{enumerate}
		\item $N=\{1, g\}$.
		In Step 3 we assume $\alpha_1=\alpha_g=1_\k$ and $\alpha_{gx}=\alpha_x \in \k$.
		Now, Step 4 says to study \eqref{N_reduzido}.
		But, using Lemma \ref{prop_analogas_lda} (a), if $(\alpha_b)_{b \in \B}$ is a solution of \eqref{N_reduzido}, then necessarily $\alpha_x=0$.
		In this case, the only possibility for the associated partial action $\lambda : \mathbb{H}_4 \longrightarrow \k$, $\lambda(b) = \aa_b$, is the counit $\varepsilon$.
		Thus, as conclusion, $\lambda$ is a partial action of $\mathbb{H}_4$ on $\k$ with initial condition $\{1, g\}$ if and only if $\lambda = \varepsilon$;
		
		\item  $N=\{1\}$.
		In this case, by Step 3 we suppose $\alpha_1=1_\k$, $\alpha_g=0$ and $\alpha_x, \alpha_{gx} \in \k$.
		Now, we move to Step 4.
		Note that $\widetilde{N}=\{x, gx\}$.
		As $x \in \B_{1,g}$,
		it follows that $\widetilde{\B} = \{gx\}$ and the system \eqref{N_reduzido} has only $4$ equations:
		\begin{align*}
		\left\{ \Lambda(gx)\Lambda(v)=\Lambda(gx)\Lambda(gv) + \Lambda(1)\Lambda(gxv) \right. \qquad v \in \mathcal{B}.
		\end{align*}
		To verify if $(\alpha_b)_{b \in \B}$ is a solution of the system above, we analyze
		$$\left\{\begin{array}{ll} \alpha_{gx}=\alpha_{gx} \\
		0=\alpha_{gx} - \alpha_{x} \\
		\alpha_{gx}\alpha_{x} = \alpha_{gx}\alpha_{gx} \\
		\alpha_{gx}\alpha_{gx}=\alpha_{gx}\alpha_{x}. \end{array}\right. $$
		Thus,  $(\alpha_b)_{b \in \B}$ is a solution of the $N$-reduced partial system associated with $\B$ if and only if $\alpha_{x}=\alpha_{gx}$.
		
		Consequently, for any $\alpha \in \k$, the linear map $\lambda_\alpha: \mathbb{H}_4 \longrightarrow \k$ given by $\lambda(1)=1_\k$, $\lambda(g)=0$ and $\lambda(x)=\lambda(gx)= \alpha$, is a partial action of $\mathbb{H}_4$ on $\k$.
	\end{enumerate}
Therefore, the partial actions of $\mathbb{H}_4$ on $\k$ are exactly $\varepsilon$ and $\lambda_\alpha$, $\alpha \in \k$.
\end{exa}

\medbreak

	We point out the benefit of our method: since a partial action of $H$ on $\k$ corresponds to a linear map $\lambda \in H^*$ satisfying $\lambda(1_H)=1_\k$ and \eqref{eqn_lda}, to calculate explicitly such a map is a task attached to solve the system \eqref{sistema_parcial_associado}, for some basis $\B$ of $H$.
	However, with our method, we deal only with the potentially smaller system \eqref{N_reduzido}.
	In Sweedler's Hopf algebra, for instance, the system \eqref{sistema_parcial_associado} has $4^2=16$ equations, while we deal only with the system of 4 equations \eqref{N_reduzido}, in the case $N=\{1\}$.

\medbreak

Since our developments deal with group-like and skew-primitive elements, it is expected that the method will be effective mainly for a Hopf algebra generated by elements of these types, as illustrated through the example above.

Using the method described, we were able to determine all partial actions on the base field of some particular families of Hopf algebras,
as can be seen in the following subsection.

\subsection{Partial actions of the pointed non-semisimple Hopf algebras $H$ with $dim_\k(H)=8,16$}\label{Subsec: 8}

In this subsection, for each pointed non-semisimple Hopf algebra $H$ with $dim_\k(H)=8,16$, we exhibit all partial actions of $H$ on its base field $\k$.
Throughout this subsection, $\k$ is assumed to be
an algebraically closed field of characteristic zero.

\medbreak

The 8-dimensional Hopf algebras were classified independently by \cite{williams} and \cite{stefan}.
There exist five (up to isomorphism) 8-dimensional pointed non-semisimple Hopf algebras.
We use the notations and  presentations for such Hopf algebras as given in \cite{classifying}:

\medbreak

$\mathcal{A}_{2} =  \langle g, \, x, \, y  \,  |  \, g^2 =1, \,\, x^2=y^2=0, \,\, xg = -gx, \,\, yg=-gy, \,\, yx=-xy \rangle ,$ where $g \in G(\mathcal{A}_{2})$ and $x, y \in P_{1,g}(\mathcal{A}_{2})$;

\smallbreak

$\mathcal{A}_{4}' = \langle g, \, x \,  | \, g^4 =1, \,\, x^2=0, \,\, xg = -gx \rangle ,$ where $g \in G(\mathcal{A}_{4}')$ and $x \in P_{1,g}(\mathcal{A}_{4}')$;

\smallbreak

$\mathcal{A}_{4}'' = \langle g, \, x \, | \, g^4 =1, \, x^2=g^2-1, \, xg=-gx\rangle ,$ where $g \in G(\mathcal{A}_{4}'')$ and $x \in P_{1,g}(\mathcal{A}_{4}'')$;

\smallbreak

$\mathcal{A}_{4,q}''' = \langle g, \, x \, | \,  g^4 =1, \,\, x^2=0, \,\, gx = qxg \rangle,$ where $q$ is a primitive root of unity of order $4$, $g \in G(\mathcal{A}_{4,q}''')$ and $x \in P_{1,g^2}(\mathcal{A}_{4,q}''')$;

\smallbreak

$\mathcal{A}_{2,2} = \langle g, \, h, \, x \,  | \, g^2 = h^2 = 1, \, x^2=0, \, xg = -gx, \, xh = -hx, \, hg=gh \rangle,$ where $g,h \in G(\mathcal{A}_{2,2})$ and $x \in P_{1,g}(\mathcal{A}_{2,2})$.

\medbreak

We observe that the subscript index in each Hopf algebra above described represents its group-like subalgebra.

\begin{prop}\label{teo_lda_dim8}
	For each pointed non-semisimple Hopf algebra of dimension 8, all partial actions on its base field are computed. They are presented in Tables \ref{tab:A_2} - \ref{tab:A_22}, where $\alpha, \beta, \gamma \in \k$ and $\gamma^2=-1$.
	\begin{center}
		\begin{table}[!ht]
		\caption{Partial actions of $\mathcal{A}_2$ on $\k$}\label{tab:A_2}
			\begin{tabular}{| c | c | c | c | c | c | c | c | c | c |}\hline
				& $1$ & $g$ & $x$ & $y$ & $gx$ & $gy$ & $xy$ & $gxy$	\\ \hline
				$\lambda_{G(\mathcal{A}_2)}=\varepsilon_{\mathcal{A}_2}$ & $1$ & $1$ & $0$ & $0$ & $0$ & $0$ & $0$ & $0$ \\ \hline
				$\lambda_{\{1\}}$ &  $1$ & $0$ & $\alpha$ & $\beta$ & $\alpha$ & $\beta$ & $0$ & $0$ \\ \hline
			\end{tabular}		
\end{table}
\begin{table}[!ht]
				\caption{Partial actions of $\mathcal{A}_{4}'$ on $\k$}\label{tab:A_4l}
			\begin{tabular}{| c | c | c | c | c | c | c | c | c | c |}\hline
				& $1$ & $g$ & $g^2$ & $g^3$ & $x$ & $gx$ & $g^2x$ & $g^3x$	\\ \hline
				$\lambda_{G(\mathcal{A}_{4}')}=\varepsilon_{\mathcal{A}_{4}'}$ & $1$ & $1$ & $1$ & $1$ & $0$ & $0$ & $0$ & $0$ \\ \hline
				$\lambda_{\{1, g^2\}}$ &  $1$ & $0$ & $1$ & $0$ & $\alpha$ & $\alpha$ & $\alpha$ & $\alpha$  \\ \hline
				$\lambda_{\{1\}}$ &  $1$ & $0$ & $0$ & $0$ & $0$ & $0$ & $0$ & $0$ \\ \hline
			\end{tabular}
\end{table}
\begin{table}[!ht]
				\caption{Partial actions of $\mathcal{A}_{4}''$ on $\k$ $(\gamma^2=-1)$}\label{tab:A_4ll}
		\begin{tabular}{| c | c | c | c | c | c | c | c | c | c |}\hline
				& $1$ & $g$ & $g^2$ & $g^3$ & $x$ & $gx$ & $g^2x$ & $g^3x$	\\ \hline
				$\lambda_{G(\mathcal{A}_{4}'')}=\varepsilon_{\mathcal{A}_{4}''}$ & $1$ & $1$ & $1$ & $1$ & $0$ & $0$ & $0$ & $0$ \\ \hline
				$\lambda_{\{1,g^2\}}$ &  $1$ & $0$ & $1$ & $0$ & $\alpha$ & $\alpha$ & $\alpha$ & $\alpha$ \\ \hline
				$\lambda_{\{1\}}$ &  $1$ & $0$ & $0$ & $0$ & $\gamma$ & $0$ & $0$ & $\gamma$ \\ \hline
			\end{tabular}\\
		\end{table}
	\begin{table}[!ht]
					\caption{Partial actions of $\mathcal{A}_{4,q}'''$ on $\k$}\label{tab:A_4lll}
			\begin{tabular}{| c | c | c | c | c | c | c | c | c | c |}\hline
				& $1$ & $g$ & $g^2$ & $g^3$ & $x$ & $gx$ & $g^2x$ & $g^3x$	\\ \hline
				$\lambda_{G(\mathcal{A}_{4,q}''')} =\varepsilon_{\mathcal{A}_{4,q}'''}$ & $1$ & $1$ & $1$ & $1$ & $0$ & $0$ & $0$ & $0$ \\ \hline
				$\lambda_{\{1\}}$ &  $1$ & $0$ & $0$ & $0$ & $\alpha$ & $0$ & $\alpha$ & $0$  \\ \hline
				$\lambda_{\{1, g^2\}}$ &  $1$ & $0$ & $1$ & $0$ & $0$ & $0$ & $0$ & $0$ \\ \hline
			\end{tabular}
		\end{table}
	\begin{table}[!ht]
				\caption{Partial actions of $\mathcal{A}_{2,2}$ on $\k$}\label{tab:A_22}
			\begin{tabular}{| c | c | c | c | c | c | c | c | c | c |}\hline
				& $1$ & $g$ & $h$ & $gh$ & $x$ & $gx$ & $hx$ & $ghx$	\\ \hline
				$\lambda_{G(\mathcal{A}_{2,2})}=\varepsilon_{\mathcal{A}_{2,2}}$ & $1$ & $1$ & $1$ & $1$ & $0$ & $0$ & $0$ & $0$ \\ \hline
				$\lambda_{\{1\}}$ &  $1$ & $0$ & $0$  & $0$ & $\alpha$ & $\alpha$ & $0$ & $0$  \\ \hline
				$\lambda_{\{1, gh\}}$ &  $1$ & $0$ & $0$ & $1$ & $\alpha$ &  $\alpha$ &  $\alpha$ &  $\alpha$ \\ \hline
				$\lambda_{\{1, h\}}$ & $ 1$ & $0$ & $1$ & $0$ & $0$ & $0$ & $0$ & $0$  \\ \hline
				$\lambda_{\{1, g\}}$ &  $1$ & $1$ & $0$ & $0$ & $0$ & $0$ & $0$ & $0$ \\ \hline
			\end{tabular}
		\end{table}
	\end{center}
\end{prop}

\begin{proof}
	To obtain the partial actions presented in Tables \ref{tab:A_2} - \ref{tab:A_22}, one does routine computations applying the method presented.
	We will do just some computations to obtain all partial actions of $\mathcal{A}_{4}''$ on $\k$, presented in Table \ref{tab:A_4ll}.
	
	Step 1: $G(\mathcal{A}_{4}'') = \{1,g,g^2,g^3 \}$ and  $\mathcal{B} = \{1,g,g^2, g^3,x,gx, g^2x,g^3x \}$.
	
	For Step 2, we note that $G(\mathcal{A}_{4}'')$ has 3 subgroups: $\{1, g, g^2, g^3\}$, $\{1, g^2\}$ and $\{1\}$.
	\begin{enumerate}
		\item $N=\{1, g, g^2, g^3\}$.
		Here, if we proceed as in Example \ref{metodo_sweedler} (1), we obtain that $\lambda$ is a partial action of $\mathcal{A}_{4}''$ on $\k$ with initial condition $\{1, g, g^2, g^3\}$ if and only if $\lambda=\varepsilon_{\mathcal{A}_{4}''}$;
		
		\item $N=\{1, g^2\}$.
		By Step 3, we consider $\aa_1=\aa_{g^2}=1$, $\aa_g= \aa_{g^3}=0$, $\aa_{g^2x}=\aa_x$ and $\aa_{g^3x}=\aa_{gx}$, where $\aa_x, \aa_{gx} \in \k$.
		For Step 4, since $x \in \B_{1,g}$ we have $\widetilde{\B}=\{gx\}$. Then, the system \eqref{N_reduzido} has 8 equations:
			\begin{align*}
		\left\{ \Lambda(gx)\Lambda(v)=\Lambda(gx)\Lambda(gv) + \Lambda(g^2)\Lambda(gxv) \right. \qquad v \in \mathcal{B}.
		\end{align*}
	Assume that $(\alpha_b)_{b \in \B}$ is a solution of the system above.
		In this case, if we replace $(\alpha_b)_{b \in \B}$ in the system above, we get
		$$\left\{\begin{array}{ll}
		\alpha_{gx} = \alpha_{gx} \\
		0 = \alpha_{gx} - \alpha_{x} \\
		\alpha_{gx}=\alpha_{gx} \\
		0 = \alpha_{gx} - \alpha_{x} \\
		\alpha_{gx}\alpha_{x}=\alpha_{gx}\alpha_{gx} \\
		\alpha_{gx}\alpha_{gx}=\alpha_{gx}\alpha_{x} \\
		\alpha_{gx}\alpha_{x}=\alpha_{gx}\alpha_{gx} \\
		\alpha_{gx}\alpha_{gx}=\alpha_{gx}\alpha_{x}.
		\end{array}\right. $$
		Clearly, $(\alpha_b)_{b \in \B}$ is a solution of the system above if and only if $\aa_{x}=\aa_{gx}$.
		
		Consequently, for any $\alpha \in \k$, the linear map $\lambda_N: \mathcal{A}_{4}'' \longrightarrow \k$ given by $\lambda(1)=\lambda(g^2)=1$, $\lambda(g)=\lambda(g^3)=0$ and $\lambda(x) = \lambda(gx) = \lambda(g^2x) = \lambda(g^3x)= \alpha$, is a partial action of $\mathcal{A}_{4}''$ on $\k$.
		
		\item $N=\{1\}$. By Step 3, we consider  $\aa_1 = 1$, $\aa_g= \aa_{g^2}= \aa_{g^3}=0$ and $\aa_x, \aa_ {gx}, \aa_{g^2x} \aa_{g^3x} \in \k$.
		
		Now we move to Step 4.
		As $x \in \B_{1,g}$, $\widetilde{\B}=\{gx, g^2x, g^3x \}$.
		Then, the system \eqref{N_reduzido} has 3 blocks of 8 equations each:	
		\begin{align*}
		\left\{ \begin{array}{ll} \Lambda(gx)\Lambda(v)=\Lambda(gx)\Lambda(gv) + \Lambda(g^2)\Lambda(gxv) \\
		\Lambda(g^2x)\Lambda(v)=\Lambda(g^2x)\Lambda(g^2v) + \Lambda(g^3)\Lambda(g^2xv) \\
		\Lambda(g^3x)\Lambda(v)=\Lambda(g^3x)\Lambda(g^3v) + \Lambda(1)\Lambda(g^3xv) \end{array} \right. \qquad v \in \mathcal{B}.
		\end{align*}
		Suppose that $(\alpha_b)_{b \in \B}$ is a solution of the system above.
		Then, by Lemma \ref{prop_analogas_lda} (a), we obtain $\aa_{gx}=\aa_{g^2x}=0$.
		Thus, we can discard the first and the second block of equations in the system above, and deal only with the third. That is,
		$$\left\{\begin{array}{ll}
	\aa_{g^3x} = \aa_{g^3x} \\
	0 = \aa_{g^3x} - \aa_{x} \\
	0 = \aa_{gx} \\
	0 = - \aa_{g^2x} \\
	\aa_{g^3x}\aa_{x}=\aa_{g^3x} \aa_{g^3x} \\
	0 =\aa_{g^3x} \aa_{x} + 1 \\
	0 =\aa_{g^3x} \aa_{gx} \\
	\aa_{g^3x}\aa_{g^3x}= - 1.
		\end{array}\right. $$
		Hence, $(\alpha_b)_{b \in \B}$ is a solution of the system above if and only if $\aa_{x}=\aa_{g^3x}$ and $\aa_{x}^2=-1$.
\end{enumerate}

	Therefore, all partial actions of $\mathcal{A}_4''$ on $\k$ are given in Table \ref{tab:A_4ll}.
\end{proof}

\medbreak

The 16-dimensional pointed non-semisimple Hopf algebras were classified by \cite{classif_pointed_dim16}, and there exist 29 isomorphism classes.

We will adopt the notation and presentation for these Hopf algebras given by generators and relations in \cite{almost_inv}, and provide in the following proposition, all the partial actions of these Hopf algebras on their base fields. 
Such partial actions are obtained by routine computations using the method described in Subsection \ref{Subset:metodo}. In particular, note that given a subgroup $N$ of $G(H)$, it is not true that there always exists a partial action $\lambda : H \longrightarrow \k$ with initial condition $N$. See, for instance, Table \ref{tab:H_6}.
In order to reduce the size of the tables, we will omit $\lambda_N(h)$ for all $h \in G(H)$, since they are determined uniquely by $N$.

\begin{prop}\label{teo_lda_dim16}
	For each pointed non-semisimple Hopf algebra of dimension 16, all partial actions on its base field are computed. They are presented in Tables \ref{tab:H_1} - \ref{tab:H_29}, where $\alpha, \beta, \theta, \gamma, \omega, \delta, \sigma, \Omega, \zeta  \in \k$ are such that  $\gamma^2 =  \omega^2 =-1,$    $\delta \sigma =0$ and $\Omega \zeta = -\frac{1}{2}$.

	{ \scriptsize
	\begin{center}
		\vspace{-0.5cm}
		\begin{table}[!ht]
			\caption{Partial actions of $\mathbf{H_{1}}$ on $\k$}\label{tab:H_1}\vspace{-0.45cm}
			\begin{tabular}{ | c | c | c | c | c | c | c | c | c | c | c | c | c | c | c | c | c | }\hline
				& $x$ & $gx$ & $y$ & $gy$ & $z$ & $gz$ & $xy$ & $gxy$ & $xz$ & $gxz$ & $yz$ & $gyz$ & $xyz$ & $gxyz$ \\ \hline
				$\lambda_G\!=\!\varepsilon$ & $0$ & $0$ & $0$ & $0$ & $0$ & $0$ & $0$ & $0$ & $0$ & $0$ & $0$ & $0$ & $0$ & $0$ \\ \hline
				$\lambda_{\{1\}}$ &  $\alpha$ & $\alpha$ & $\beta$ & $\beta$ & $\theta$ &  $\theta$ & $0$ & $0$ & $0$ & $0$ & $0$ & $0$ & $0$ & $0$ \\ \hline
			\end{tabular}\vspace{0.3cm}
			\caption{Partial actions of $\mathbf{H_{2}}$ on $\k$}\label{tab:H_2}\vspace{-0.45cm}
			\begin{tabular}{ | c | c | c | c | c | c | c | c | c | c | c | c | c | c | c | c | c | }\hline
				& $x$ & $gx$ & $g^2x$ & $g^3x$ & $y$ & $gy$ & $g^2y$ & $g^3y$ & $xy$ & $gxy$ & $g^2xy$ & $g^3xy$ \\ \hline
				$\lambda_G =\varepsilon$  & $0$ & $0$ & $0$ & $0$ & $0$ & $0$ & $0$ & $0$ & $0$ & $0$ & $0$ & $0$ \\ \hline
				$\lambda_{\{1\}}$ & $\alpha$ & $0$ & $\alpha$ &  $0$ & $\beta$ & $0$ & $\beta$ & $0$ & $0$ & $0$ & $0$ & $0$ \\ \hline
				$\lambda_{\{1, g^2\}}$ & $0$ & $0$ & $0$ &  $0$ & $0$ & $0$ & $0$ & $0$ & $0$ & $0$ & $0$ & $0$ \\ \hline
			\end{tabular}\vspace{0.3cm}
			\caption{Partial actions of $\mathbf{H_{3}}$ on $\k$}\label{tab:H_3}\vspace{-0.45cm}
			\begin{tabular}{ | c | c | c | c | c | c | c | c | c | c | c | c | c | c | c | c | c | }\hline
				& $x$ & $gx$ & $g^2x$ & $g^3x$ & $y$ & $gy$ & $g^2y$ & $g^3y$ & $xy$ & $gxy$ & $g^2xy$ & $g^3xy$ \\ \hline
				$\lambda_G=\varepsilon$  & $0$ & $0$ & $0$ & $0$ & $0$ & $0$ & $0$ & $0$ & $0$ & $0$ & $0$ & $0$ \\ \hline
				$\lambda_{\{1\}}$ & $\alpha$ & $0$ & $\alpha$ &  $0$ & $\beta$ & $0$ & $\beta$ & $0$ & $0$ & $0$ & $0$ & $0$ \\ \hline
				$\lambda_{\{1, g^2\}}$ & $0$ & $0$ & $0$ &  $0$ & $0$ & $0$ & $0$ & $0$ & $0$ & $0$ & $0$ & $0$ \\ \hline
			\end{tabular}\vspace{0.3cm}
			\caption{Partial actions of $\mathbf{H_{4}}$ on $\k$}\label{tab:H_4}\vspace{-0.45cm}
			\begin{tabular}{ | c | c | c | c | c | c | c | c | c | c | c | c | c | c | c | c | c } \hline
				& $x$ & $gx$ & $g^2x$ & $g^3x$ & $y$ & $gy$ & $g^2y$ & $g^3y$ & $xy$ &$gxy$ & $g^2xy$ & $g^3xy$ \\ \hline
				$\lambda_G=\varepsilon$  & $0$ & $0$ & $0$ & $0$ & $0$ & $0$ & $0$ & $0$ & $0$ & $0$ & $0$ & $0$ \\ \hline
				$\lambda_{\{1,g^2\}}$ & $\alpha$ & $\alpha$ & $\alpha$ &  $\alpha$ & $\beta$ & $\beta$ & $\beta$ & $\beta$ & $0$ & $0$ & $0$ & $0$ \\ \hline
				$\lambda_{\{1\}}$ & $0$ & $0$ &  $0$ & $0$ & $0$ & $0$ & $0$ & $0$ & $0$ & $0$ & $0$ & $0$ \\ \hline
			\end{tabular}\vspace{0.3cm}
			\caption{Partial actions of $\mathbf{H_{5}}$ on $\k$ $(\gamma^2 =-1)$}\label{tab:H_5}\vspace{-0.45cm}
			\begin{tabular}{ | c | c | c | c | c | c | c | c | c | c | c | c | c | c | c | c | c | } \hline
				& $x$ & $gx$ & $g^2x$ & $g^3x$ & $y$ & $gy$ & $g^2y$ & $g^3y$ & $xy$ &$gxy$ & $g^2xy$ & $g^3xy$ \\ \hline
				$\lambda_G=\varepsilon$  & $0$ & $0$ & $0$ & $0$ & $0$ & $0$ & $0$ & $0$ & $0$ & $0$ & $0$ & $0$ \\ \hline
				$\lambda_{\{1, g^2\}}$ & $\alpha$ & $\alpha$ & $\alpha$ &  $\alpha$ & $\beta$ & $\beta$ & $\beta$ & $\beta$ & $0$ & $0$ & $0$ & $0$ \\ \hline
				$\lambda_{\{1\}}$ & $0$ & $0$ &  $0$ & $0$ & $\gamma$ & $0$ & $0$ & $\gamma$ & $0$ & $0$ & $0$ & $0$ \\ \hline
			\end{tabular}\vspace{0.3cm}
			\caption{Partial actions of $\mathbf{H_{6}}$ on $\k$}\label{tab:H_6}\vspace{-0.45cm}
			\begin{tabular}{ | c | c | c | c | c | c | c | c | c | c | c | c | c | c | c | c | c | } \hline
				& $x$ & $gx$ & $g^2x$ & $g^3x$ & $y$ & $gy$ & $g^2y$ & $g^3y$ & $xy$ &$gxy$ & $g^2xy$ & $g^3xy$ \\ \hline
				$\lambda_G=\varepsilon$  & $0$ & $0$ & $0$ & $0$ & $0$ & $0$ & $0$ & $0$ & $0$ & $0$ & $0$ & $0$ \\ \hline
				$\lambda_{\{1, g^2\}}$ & $\alpha$ & $\alpha$ & $\alpha$ &  $\alpha$ & $\beta$ & $\beta$ & $\beta$ & $\beta$ & $0$ & $0$ & $0$ & $0$ \\ \hline
			\end{tabular}\vspace{0.3cm}
			\caption{Partial actions of $\mathbf{H_{7}}$ on $\k$}\label{tab:H_7}\vspace{-0.45cm}
			\begin{tabular}{ | c | c | c | c | c | c | c | c | c | c | c | c | c | c | c | c | c | } \hline
				& $x$ & $gx$ & $g^2x$ & $g^3x$ & $y$ & $gy$ & $g^2y$ & $g^3y$ & $xy$ &$gxy$ & $g^2xy$ & $g^3xy$ \\ \hline
				$\lambda_G = \varepsilon$  & $0$ & $0$ & $0$ & $0$ & $0$ & $0$ & $0$ & $0$ & $0$ & $0$ & $0$ & $0$ \\ \hline
				$\lambda_{\{1, g^2\}}$ & $\alpha$ & $\alpha$ & $\alpha$ &  $\alpha$ & $\beta$ & $\beta$ & $\beta$ & $\beta$ & $0$ & $0$ & $0$ & $0$ \\ \hline
				$\lambda_{\{1\}}$ & $0$ & $0$ &  $0$ & $0$ & $\alpha$ & $\alpha$ & $0$ & $0$ & $0$ & $0$ & $0$ & $0$ \\ \hline
			\end{tabular}\vspace{0.3cm}
			\caption{Partial actions of $\mathbf{H_{8}}$ on $\k$ $(\gamma^2=-1)$}\label{tab:H_8}\vspace{-0.45cm}
			\begin{tabular}{ | c | c | c | c | c | c | c | c | c | c | c | c | c | c | c | c | c | } \hline
				& $x$ & $gx$ & $g^2x$ & $g^3x$ & $y$ & $gy$ & $g^2y$ & $g^3y$ & $xy$ &$gxy$ & $g^2xy$ & $g^3xy$ \\ \hline
				$\lambda_G=\varepsilon$  & $0$ & $0$ & $0$ & $0$ & $0$ & $0$ & $0$ & $0$ & $0$ & $0$ & $0$ & $0$ \\ \hline
				$\lambda_{\{1, g^2\}}$ &  $\alpha$ & $\alpha$ & $\alpha$ &  $\alpha$ & $\beta$ & $\beta$ & $\beta$ & $\beta$ & $0$ & $0$ & $0$ & $0$ \\ \hline
				$\lambda_{\{1\}}$ & $0$ & $0$ &  $0$ & $0$ & $\gamma$ & $\gamma$ & $0$ & $0$ & $0$ & $0$ & $0$ & $0$ \\ \hline
			\end{tabular}\vspace{0.3cm}
			\caption{Partial actions of $\mathbf{H_{9}}$ on $\k$ $(\gamma^2 = \omega^2 =-1)$}\label{tab:H_9}\vspace{-0.4cm}
			\begin{tabular}{ | c | c | c | c | c | c | c | c | c | c | c | c | c | c | c | c | c | } \hline
				& $x$ & $gx$ & $g^2x$ & $g^3x$ & $y$ & $gy$ & $g^2y$ & $g^3y$ & $xy$ &$gxy$ & $g^2xy$ & $g^3xy$ \\ \hline
				$\lambda_G=\varepsilon$  & $0$ & $0$ & $0$ & $0$ & $0$ & $0$ & $0$ & $0$ & $0$ & $0$ & $0$ & $0$ \\ \hline
				$\lambda_{\{1,g^2\}}$ & $\alpha$ & $\alpha$ & $\alpha$ &  $\alpha$ & $\beta$ & $\beta$ & $\beta$ & $\beta$ & $0$ & $0$ & $0$ & $0$ \\ \hline
				$\lambda_{\{1\}}$ & $\gamma$ & $0$ &  $0$ & $\gamma$ & $\omega$ & $\omega$ & $0$ & $0$ & $0$ & $-\gamma \omega$ & $0$ & $\gamma \omega$ \\ \hline
			\end{tabular}\vspace{0.3cm}
			\caption{Partial actions of $\mathbf{H_{10}}$ on $\k$}\label{tab:H_10}\vspace{-0.45cm}
			\begin{tabular}{ | c | c | c | c | c | c | c | c | c | c | c | c | c | c | c | c | c | } \hline
				& $x$ & $gx$ & $g^2x$ & $g^3x$ & $x^2$ & $gx^2$ & $g^2x^2$ & $g^3x^2$ & $x^3$ &$gx^3$ & $g^2x^3$ & $g^3x^3$ \\ \hline
				$\lambda_G\!=\!\varepsilon$ & $0$ & $0$ & $0$ & $0$ & $0$ & $0$ & $0$ & $0$ & $0$ & $0$ & $0$ & $0$ \\ \hline
				$\lambda_{\{1, g^2\}}$ & $0$ & $0$ & $0$ &  $0$ & $0$ & $0$ & $0$ & $0$ & $0$ & $0$ & $0$ & $0$ \\ \hline
				$\lambda_{\{1\}}$ & $\alpha$ & $0$ &  $0$ & $-q \alpha$ & $\alpha^2$ & $0$ & $-q \alpha^2$ & $(1\!-\!q)\alpha^2$ & $\alpha^3$ & $\alpha^3$ & $\alpha^3$ & $\alpha^3$ \\ \hline
			\end{tabular}\vspace{0.3cm}
	\end{table}	
	\begin{table}[!ht]
			\caption{Partial actions of $\mathbf{H_{11}}$ on $\k$}\label{tab:H_11}\vspace{-0.4cm}
			\begin{tabular}{ | c | c | c | c | c | c | c | c | c | c | c | c | c | c | c | c | c | } \hline
				& $x$ & $gx$ & $g^2x$ & $g^3x$ & $x^2$ & $gx^2$ & $g^2x^2$ & $g^3x^2$ & $x^3$ &$gx^3$ & $g^2x^3$ & $g^3x^3$ \\ \hline
				$\lambda_G\!=\!\varepsilon$ & $0$ & $0$ & $0$ & $0$ & $0$ & $0$ & $0$ & $0$ & $0$ & $0$ & $0$ & $0$ \\ \hline
				$\lambda_{\{1, g^2\}}$ & $0$ & $0$ & $0$ &  $0$ & $0$ & $0$ & $0$ & $0$ & $0$ & $0$ & $0$ & $0$ \\ \hline
				$\lambda_{\{1\}}$ & $\alpha$ & $0$ &  $0$ & $q \alpha$ & $\alpha^2$ & $0$ & $q \alpha^2$ & $(1\!+\!q)\alpha^2$ & $\alpha^3$ & $\alpha^3$ & $\alpha^3$ & $\alpha^3$ \\ \hline
			\end{tabular}\vspace{0.3cm}
			\caption{Partial actions of $\mathbf{H_{12}}$ on $\k$}\label{tab:H_12}\vspace{-0.4cm}
			\begin{tabular}{ | c | c | c | c | c | c | c | c | c | c | c | c | c | c | c | c | c | }\hline
				& $x$ & $hx$  & $y$ & $hy$ & $gx$ & $ghx$  & $gy$ & $ghy$ & $xy$ & $hxy$  & $gxy$ & $ghxy$ \\ \hline
				$\lambda_G=\varepsilon$  & $0$ & $0$ & $0$ & $0$ & $0$ & $0$ & $0$ & $0$ & $0$ & $0$ & $0$ & $0$ \\ \hline
				$\lambda_{\{1,h\}}$ & $\alpha$ & $\alpha$ & $\beta$ &  $\beta$ & $\alpha$ & $\alpha$ & $\beta$ &  $\beta$ & $0$ & $0$ & $0$ & $0$ \\ \hline
				$\lambda_{\{1,g\}}$ & $0$ & $0$ &  $0$ & $0$ & $0$ & $0$ & $0$ & $0$ & $0$ & $0$ & $0$ & $0$ \\ \hline
				$\lambda_{\{1,gh\}}$ & $0$ & $0$ & $0$ & $0$ & $0$ & $0$ & $0$ & $0$ & $0$ & $0$ & $0$ & $0$ \\ \hline
				$\lambda_{\{1\}}$ & $\alpha$ & $0$ & $\beta$ & $0$ & $\alpha$ & $0$ & $\beta$ & $0$ & $0$ & $0$ & $0$ & $0$ \\ \hline
			\end{tabular}\vspace{0.3cm}
			\caption{Partial actions of $\mathbf{H_{13}}$ on $\k$}\label{tab:H_13}\vspace{-0.4cm}
			\begin{tabular}{ | c | c | c | c | c | c | c | c | c | c | c | c | c | c | c | c | c | } \hline
				& $x$ & $gx$ & $hx$ & $ghx$ & $y$ & $gy$ & $hy$ & $ghy$ & $xy$ &$gxy$ & $hxy$ & $ghxy$ \\ \hline
				$\lambda_G=\varepsilon$  & $0$ & $0$ & $0$ & $0$ & $0$ & $0$ & $0$ & $0$ & $0$ & $0$ & $0$ & $0$ \\ \hline
				$\lambda_{\{1,g\}}$ & $0$ & $0$ & $0$ &  $0$ & $0$ & $0$ & $0$ & $0$ & $0$ & $0$ & $0$ & $0$ \\ \hline
				$\lambda_{\{1,h\}}$ & $\alpha$ & $\alpha$ &  $\alpha$ & $\alpha$ & $0$ & $0$ & $0$ & $0$ & $0$ & $0$ & $0$ & $0$ \\ \hline
				$\lambda_{\{1,gh\}}$ & $0$ & $0$ & $0$ & $0$ & $\alpha$ & $\alpha$ & $\alpha$ & $\alpha$ & $0$ & $0$ & $0$ & $0$ \\ \hline
				$\lambda_{\{1\}}$ & $\alpha$ & $\alpha$ & $0$ & $0$ & $\beta$ & $\beta$ & $0$ & $0$ & $0$ & $0$ & $0$ & $0$ \\ \hline
			\end{tabular}\vspace{0.3cm}
			\caption{Partial actions of $\mathbf{H_{14}}$ on $\k$}\label{tab:H_14}\vspace{-0.4cm}
			\begin{tabular}{| c | c | c | c | c | c | c | c | c | c | c | c | c | c | c | c | c |} \hline
				& $y$ & $hy $  & $x$ & $gx$	& $gy$ & $ghy $  & $hx$ & $ghx$ & $xy$ & $gxy $  & $hxy$ & $ghxy$   \\ \hline
				$\lambda_G=\varepsilon$  & $0$ & $0$ & $0$ & $0$ & $0$ & $0$ & $0$ & $0$ & $0$ & $0$ & $0$ & $0$ \\ \hline
				$\lambda_{\{1,h\}}$ & $0$ & $0$ & $\alpha$ &  $\alpha$ & $0$ & $0$ & $\alpha$ &   $\alpha$ & $0$ & $0$ & $0$ &  $0$\\ \hline
				$\lambda_{\{1,g\}}$ & $\alpha$ & $\alpha$ &  $0$ & $0$  & $\alpha$ & $\alpha$ &  $0$ & $0$  & $0$ & $0$ &  $0$ & $0$ \\ \hline
				$\lambda_{\{1,gh\}}$ & $0$ & $0$ & $0$ & $0$ & $0$ & $0$ &  $0$ & $0$ & $0$ & $0$ &  $0$ & $0$ \\ \hline
				$\lambda_{\{1\}}$ & $\beta$ & $\beta$ & $\alpha$ & $\alpha$ & $0$ & $0$ &  $0$ & $0$  & $\alpha \beta$ & $\alpha \beta$  & $\alpha \beta$ & $\alpha \beta$ \\ \hline
			\end{tabular}\vspace{0.3cm}
			\caption{Partial actions of $\mathbf{H_{15}}$ on $\k$ $(\sigma \delta = 0)$}\label{tab:H_15}\vspace{-0.4cm}
			\begin{tabular}{ | c | c | c | c | c | c | c | c | c | c | c | c | c | c | c | c | c | } \hline
				& $x$ & $gx$ & $hx$ & $ghx$ & $y$ & $gy$ & $hy$ & $ghy$ & $xy$ &$gxy$ & $hxy$ & $ghxy$ \\ \hline
				$\lambda_G=\varepsilon$  & $0$ & $0$ & $0$ & $0$ & $0$ & $0$ & $0$ & $0$ & $0$ & $0$ & $0$ & $0$ \\ \hline
				$\lambda_{\{1, g\}}$ & $0$ & $0$ & $0$ &  $0$ & $0$ & $0$ & $0$ & $0$ & $0$ & $0$ & $0$ & $0$ \\ \hline
				$\lambda_{\{1, h\}}$ & $0$ & $0$ &  $0$ & $0$ & $0$ & $0$ & $0$ & $0$ & $0$ & $0$ & $0$ & $0$ \\ \hline
				$\lambda_{\{1, gh\}}$ & $\alpha$ & $\alpha$ & $\alpha$ & $\alpha$ & $\beta$ & $\beta$ & $\beta$ & $\beta$ & $0$ & $0$ & $0$ & $0$ \\ \hline
				$\lambda_{\{1 \}}$ & $\sigma$ & $\sigma$ & $0$ & $0$ & $\delta$ & $0$ & $\delta$ & $0$ & $0$ & $0$ & $0$ & $0$ \\  \hline
			\end{tabular}\vspace{0.3cm}
			\caption{Partial actions of $\mathbf{H_{16}}$ on $\k$ $\left(\Omega \zeta = - \frac{1}{2}\right)$}\label{tab:H_16}\vspace{-0.4cm}
			\begin{tabular}{ | c | c | c | c | c | c | c | c | c | c | c | c | c | c | c | c | c | } \hline
				& $x$ & $gx$ & $hx$ & $ghx$ & $y$ & $gy$ & $hy$ & $ghy$ & $xy$ &$gxy$ & $hxy$ & $ghxy$ \\ \hline
				$\lambda_G=\varepsilon$  & $0$ & $0$ & $0$ & $0$ & $0$ & $0$ & $0$ & $0$ & $0$ & $0$ & $0$ & $0$ \\ \hline
				$\lambda_{\{1, gh\}}$ & $\alpha$ & $\alpha$ & $\alpha$ & $\alpha$ & $\beta$ & $\beta$ & $\beta$ & $\beta$ & $0$ & $0$ & $0$ & $0$ \\ \hline
				$\lambda_{\{1 \}}$ & $\Omega$ & $\Omega$ & $0$ & $0$ & $\zeta$ & $0$ & $\zeta$ & $0$ & $- \frac{1}{2}$ & $- \frac{1}{2}$ & $\frac{1}{2}$ & $\frac{1}{2}$ \\  \hline
			\end{tabular}\vspace{0.3cm}
			\caption{Partial actions of $\mathbf{H_{17}}$ on $\k$}\label{tab:H_17}\vspace{-0.4cm}
			\begin{tabular}{ | c | c | c | c | c | c | c | c | c | c | c | c | c | c | c | c | c | } \hline
				& $x$ & $ax$ & $bx$ & $abx$ & $gx$ & $agx$ & $bgx$ & $abgx$  \\ \hline
				$\lambda_G=\varepsilon$  & $0$ & $0$ & $0$ & $0$ & $0$ & $0$ & $0$ & $0$ \\ \hline
				$\lambda_{\{1, a, g, ag \}}$ & $0$ & $0$ & $0$ & $0$ & $0$ & $0$ & $0$ & $0$ \\ \hline
				$\lambda_{\{1, b, g, bg \}}$ & $0$ & $0$ & $0$ & $0$ & $0$ & $0$ & $0$ & $0$ \\ \hline
				$\lambda_{\{1, ab, g, abg \}}$ & $0$ & $0$ & $0$ & $0$ & $0$ & $0$ & $0$ & $0$ \\ \hline
				$\lambda_{\{1, a, gb, agb \}}$ & $0$ & $0$ & $0$ & $0$ & $0$ & $0$ & $0$ & $0$ \\ \hline
				$\lambda_{\{1, b, ag, abg \}}$ & $0$ & $0$ & $0$ & $0$ & $0$ & $0$ & $0$ & $0$ \\ \hline
				$\lambda_{\{1, ab, ag, bg \}}$ & $0$ & $0$ & $0$ & $0$ & $0$ & $0$ & $0$ & $0$ \\ \hline
				$\lambda_{\{1, a, b, ab \}}$ &  $\alpha$ & $\alpha$ & $\alpha$ & $\alpha$ & $\alpha$ & $\alpha$ & $\alpha$ & $\alpha$ \\ \hline
				$\lambda_{\{1, g\}}$ & $0$ & $0$ & $0$ & $0$ & $0$ & $0$ & $0$ & $0$ \\ \hline
				$\lambda_{\{1, ag \}}$ & $0$ & $0$ & $0$ & $0$ & $0$ & $0$ & $0$ & $0$ \\ \hline
				$\lambda_{\{1, bg \}}$ & $0$ & $0$ & $0$ & $0$ & $0$ & $0$ & $0$ & $0$ \\ \hline
				$\lambda_{\{1, abg \}}$ & $0$ & $0$ & $0$ & $0$ & $0$ & $0$ & $0$ & $0$ \\ \hline
				$\lambda_{\{1, a\}}$ & $\alpha$ & $\alpha$ & $0$ & $0$ & $\alpha$ & $\alpha$ & $0$ & $0$ \\ \hline
				$\lambda_{\{1, b\}}$ & $\alpha$ & $0$  & $\alpha$ & $0$ & $\alpha$ &  $0$ & $\alpha$ & $0$ \\ \hline
				$\lambda_{\{1, ab\}}$ & $\alpha$ &  $0$ & $0$ & $\alpha$ & $\alpha$  & $0$ & $0$ & $\alpha$ \\ \hline
				$\lambda_{\{1 \}}$ & $\alpha$ & $0$ & $0$ & $0$ & $\alpha$ & $0$ & $0$ & $0$ \\ \hline
			\end{tabular}\vspace{0.3cm}
		\end{table}
		\begin{table}[!ht]
			\caption{Partial actions of $\mathbf{H_{18}}$ on $\k$}\label{tab:H_18}\vspace{-0.4cm}
			\begin{tabular}{ | c | c | c | c | c | c | c | c | c | c | c | c | c | c | c | c | c | } \hline
				& $x$ & $gx$ & $g^2x$ & $g^3x$ & $g^4x$ &$g^5x$ & $g^6x$ & $g^7x$ \\ \hline
				$\lambda_G=\varepsilon$  & $0$ & $0$ & $0$ & $0$ & $0$ & $0$ & $0$ & $0$ \\ \hline
				$\lambda_{\{1, g^2, g^4, g^6\}}$ & $\alpha$ & $\alpha$ & $\alpha$ & $\alpha$ & $\alpha$ & $\alpha$ & $\alpha$ & $\alpha$ \\ \hline
				$\lambda_{\{1, g^4\}}$ & $0$ & $0$ & $0$ & $0$ & $0$ & $0$ & $0$ & $0$ \\ \hline
				$\lambda_{\{1 \}}$ & $0$ & $0$ & $0$ & $0$ & $0$ & $0$ & $0$ & $0$ \\ \hline
			\end{tabular}\vspace{0.3cm}
			\caption{Partial actions of $\mathbf{H_{19}}$ on $\k$}\label{tab:H_19}\vspace{-0.4cm}
			\begin{tabular}{ | c | c | c | c | c | c | c | c | c | c | c | c | c | c | c | c | c | } \hline
				& $x$ & $gx$ & $g^2x$ & $g^3x$ & $g^4x$ &$g^5x$ & $g^6x$ & $g^7x$ \\ \hline
				$\lambda_G=\varepsilon$  & $0$ & $0$ & $0$ & $0$ & $0$ & $0$ & $0$ & $0$ \\ \hline
				$\lambda_{\{1, g^2, g^4, g^6\}}$ & $0$ & $0$ & $0$ & $0$ & $0$ & $0$ & $0$ & $0$  \\ \hline
				$\lambda_{\{1, g^4\}}$ & $0$ & $0$ & $0$ & $0$ & $0$ & $0$ & $0$ & $0$ \\ \hline
				$\lambda_{\{1 \}}$ & $\alpha$ & $0$ & $0$ & $0$ & $\alpha$ & $0$ & $0$ & $0$ \\ \hline
			\end{tabular}\vspace{0.3cm}
			\caption{Partial actions of $\mathbf{H_{20}}$ on $\k$}\label{tab:H_20}\vspace{-0.4cm}
			\begin{tabular}{ | c | c | c | c | c | c | c | c | c | c | c | c | c | c | c | c | c | } \hline
				& $x$ & $gx$ & $g^2x$ & $g^3x$ & $g^4x$ &$g^5x$ & $g^6x$ & $g^7x$ \\ \hline
				$\lambda_G=\varepsilon$  & $0$ & $0$ & $0$ & $0$ & $0$ & $0$ & $0$ & $0$ \\ \hline
				$\lambda_{\{1, g^2, g^4, g^6\}}$ & $0$ & $0$ & $0$ & $0$ & $0$ & $0$ & $0$ & $0$  \\ \hline
				$\lambda_{\{1, g^4\}}$ & $\alpha$ & $0$ & $\alpha$ & $0$ & $\alpha$ & $0$ & $\alpha$ & $0$ \\ \hline
				$\lambda_{\{1 \}}$ & $0$ & $0$ & $0$ & $0$ & $0$ & $0$ & $0$ & $0$ \\ \hline
			\end{tabular}\vspace{0.3cm}
			\caption{Partial actions of $\mathbf{H_{21}}$ on $\k$}\label{tab:H_21}\vspace{-0.4cm}
			\begin{tabular}{ | c | c | c | c | c | c | c | c | c | c | c | c | c | c | c | c | c | } \hline
				& $x$ & $gx$ & $g^2x$ & $g^3x$ & $g^4x$ &$g^5x$ & $g^6x$ & $g^7x$ \\ \hline
				$\lambda_G=\varepsilon$  & $0$ & $0$ & $0$ & $0$ & $0$ & $0$ & $0$ & $0$ \\ \hline
				$\lambda_{\{1, g^2, g^4, g^6\}}$ & $0$ & $0$ & $0$ & $0$ & $0$ & $0$ & $0$ & $0$  \\ \hline
				$\lambda_{\{1, g^4\}}$ & $\alpha$ & $0$ & $\alpha$ & $0$ & $\alpha$ & $0$ & $\alpha$ & $0$ \\ \hline
				$\lambda_{\{1 \}}$ & $0$ & $0$ & $0$ & $0$ & $0$ & $0$ & $0$ & $0$ \\ \hline
			\end{tabular}\vspace{0.3cm}
			\caption{Partial actions of $\mathbf{H_{22}}$ on $\k$ $(\gamma^2=-1)$}\label{tab:H_22}\vspace{-0.4cm}
			\begin{tabular}{ | c | c | c | c | c | c | c | c | c | c | c | c | c | c | c | c | c | } \hline
				& $x$ & $gx$ & $g^2x$ & $g^3x$ & $g^4x$ &$g^5x$ & $g^6x$ & $g^7x$ \\ \hline
				$\lambda_G=\varepsilon$  & $0$ & $0$ & $0$ & $0$ & $0$ & $0$ & $0$ & $0$ \\ \hline
				$\lambda_{\{1, g^2, g^4, g^6\}}$ & $\alpha$ & $\alpha$ & $\alpha$ & $\alpha$ & $\alpha$ & $\alpha$ & $\alpha$ & $\alpha$  \\ \hline
				$\lambda_{\{1, g^4\}}$ & $\gamma$ & $0$ & $0$ & $\gamma$ & $\gamma$ & $0$ & $0$ & $\gamma$ \\ \hline
				$\lambda_{\{1 \}}$ & $\gamma$ & $0$ & $0$ & $0$ & $0$ & $0$ & $0$ & $\gamma$ \\ \hline
			\end{tabular}\vspace{0.3cm}
			\caption{Partial actions of $\mathbf{H_{23}}$ on $\k$}\label{tab:H_23}\vspace{-0.4cm}
			\begin{tabular}{ | c | c | c | c | c | c | c | c | c | c | c | c | c | c | c | c | c | } \hline
				& $x$ & $gx$ & $g^2x$ & $g^3x$ & $hx$ &$ghx$ & $g^2hx$ & $g^3hx$ \\ \hline
				$\lambda_G=\varepsilon$  & $0$ & $0$ & $0$ & $0$ & $0$ & $0$ & $0$ & $0$ \\ \hline
				$\lambda_{\{1, g, g^2, g^3\}}$ & $0$ & $0$ & $0$ & $0$ & $0$ & $0$ & $0$ & $0$ \\ \hline
				$\lambda_{\{1, g^2, h, g^2h\}}$ & $\alpha$ & $\alpha$ & $\alpha$ & $\alpha$ & $\alpha$ & $\alpha$ & $\alpha$ & $\alpha$ \\ \hline
				$\lambda_{\{1, g^2h\}}$ & $0$ & $0$ & $0$ & $0$ & $0$ & $0$ & $0$ & $0$ \\ \hline
				$\lambda_{\{1, h\}}$ & $0$ & $0$ & $0$ & $0$ & $0$ & $0$ & $0$ & $0$ \\ \hline
				$\lambda_{\{1, g^2, gh, g^3h\}}$ & $0$ & $0$ & $0$ & $0$ & $0$ & $0$ & $0$ & $0$ \\ \hline
				$\lambda_{\{1, g^2\}}$ & $\alpha$ & $\alpha$ & $\alpha$ & $\alpha$ & $0$ & $0$ & $0$ & $0$ \\ \hline
				$\lambda_{\{1 \}}$ & $0$ & $0$ & $0$ & $0$ & $0$ & $0$ & $0$ & $0$ \\ \hline
			\end{tabular}\vspace{0.3cm}
			\caption{Partial actions of $\mathbf{H_{24}}$ on $\k$}\label{tab:H_24}\vspace{-0.4cm}
			\begin{tabular}{ | c | c | c | c | c | c | c | c | c | c | c | c | c | c | c | c | c | } \hline
				& $x$ & $gx$ & $g^2x$ & $g^3x$ & $hx$ &$ghx$ & $g^2hx$ & $g^3hx$ \\ \hline
				$\lambda_G=\varepsilon$  & $0$ & $0$ & $0$ & $0$ & $0$ & $0$ & $0$ & $0$ \\ \hline
				$\lambda_{\{1, g, g^2, g^3\}}$ & $\alpha$ & $\alpha$ & $\alpha$ & $\alpha$ & $\alpha$ & $\alpha$ & $\alpha$ & $\alpha$ \\ \hline
				$\lambda_{\{1, g^2, h, g^2h\}}$ & $0$ & $0$ & $0$ & $0$ & $0$ & $0$ & $0$ & $0$ \\ \hline
				$\lambda_{\{1, g^2h\}}$ & $0$ & $0$ & $0$ & $0$ & $0$ & $0$ & $0$ & $0$ \\ \hline
				$\lambda_{\{1, h\}}$ & $0$ & $0$ & $0$ & $0$ & $0$ & $0$ & $0$ & $0$ \\ \hline
				$\lambda_{\{1, g^2, gh, g^3h\}}$ & $0$ & $0$ & $0$ & $0$ & $0$ & $0$ & $0$ & $0$ \\ \hline
				$\lambda_{\{1, g^2\}}$ & $\alpha$ & $0$ & $\alpha$ & $0$ & $0$ & $\alpha$ & $0$ & $\alpha$ \\ \hline
				$\lambda_{\{1 \}}$ & $0$ & $0$ & $0$ & $0$ & $0$ & $0$ & $0$ & $0$ \\ \hline
			\end{tabular}\vspace{0.3cm}
		\end{table}		
		\begin{table}[!ht]
			\caption{Partial actions of $\mathbf{H_{25}}$ on $\k$}\label{tab:H_25}\vspace{-0.4cm}
			\begin{tabular}{ | c | c | c | c | c | c | c | c | c | c | c | c | c | c | c | c | c | } \hline
				& $x$ & $gx$ & $g^2x$ & $g^3x$ & $hx$ &$ghx$ & $g^2hx$ & $g^3hx$ \\ \hline
				$\lambda_G=\varepsilon$  & $0$ & $0$ & $0$ & $0$ & $0$ & $0$ & $0$ & $0$ \\ \hline
				$\lambda_{\{1, g, g^2, g^3\}}$ & $0$ & $0$ & $0$ & $0$ & $0$ & $0$ & $0$ & $0$ \\ \hline
				$\lambda_{\{1, g^2, h, g^2h\}}$ & $0$ & $0$ & $0$ & $0$ & $0$ & $0$ & $0$ & $0$ \\ \hline
				$\lambda_{\{1, g^2h\}}$ & $0$ & $0$ & $0$ & $0$ & $0$ & $0$ & $0$ & $0$ \\ \hline
				$\lambda_{\{1, h\}}$ & $\alpha$ & $0$ & $\alpha$ & $0$ & $\alpha$ & $0$ & $\alpha$ & $0$ \\ \hline
				$\lambda_{\{1, g^2, gh, g^3h\}}$ & $0$ & $0$ & $0$ & $0$ & $0$ & $0$ & $0$ & $0$ \\ \hline
				$\lambda_{\{1, g^2\}}$ & $0$ & $0$ & $0$ & $0$ & $0$ & $0$ & $0$ & $0$ \\ \hline
				$\lambda_{\{1 \}}$ & $\alpha$ & $0$ & $\alpha$ & $0$ & $0$ & $0$ & $0$ & $0$ \\ \hline
			\end{tabular}\vspace{0.3cm}
			\caption{Partial actions of $\mathbf{H_{26}}$ on $\k$}\label{tab:H_26}\vspace{-0.4cm}
			\begin{tabular}{ | c | c | c | c | c | c | c | c | c | c | c | c | c | c | c | c | c | } \hline
				& $x$ & $gx$ & $g^2x$ & $g^3x$ & $hx$ &$ghx$ & $g^2hx$ & $g^3hx$ \\ \hline
				$\lambda_G=\varepsilon$  & $0$ & $0$ & $0$ & $0$ & $0$ & $0$ & $0$ & $0$ \\ \hline
				$\lambda_{\{1, g, g^2, g^3\}}$ & $\alpha$ & $\alpha$ & $\alpha$ & $\alpha$ & $\alpha$ & $\alpha$ & $\alpha$ & $\alpha$ \\ \hline
				$\lambda_{\{1, g^2, h, g^2h\}}$ & $0$ & $0$ & $0$ & $0$ & $0$ & $0$ & $0$ & $0$ \\ \hline
				$\lambda_{\{1, g^2h\}}$ & $0$ & $0$ & $0$ & $0$ & $0$ & $0$ & $0$ & $0$ \\ \hline
				$\lambda_{\{1, h\}}$ & $0$ & $0$ & $0$ & $0$ & $0$ & $0$ & $0$ & $0$ \\ \hline
				$\lambda_{\{1, g^2, gh, g^3h\}}$ & $0$ & $0$ & $0$ & $0$ & $0$ & $0$ & $0$ & $0$ \\ \hline
				$\lambda_{\{1, g^2\}}$ & $\alpha$ & $0$ & $\alpha$ & $0$ & $\alpha$ & $0$ & $\alpha$ & $0$ \\ \hline
				$\lambda_{\{1 \}}$ & $\alpha$ & $0$ & $0$ & $0$ & $\alpha$ & $0$ & $0$ & $0$ \\ \hline
			\end{tabular}\vspace{0.3cm}
			\caption{Partial actions of $\mathbf{H_{27}}$ on $\k$}\label{tab:H_27}\vspace{-0.4cm}
			\begin{tabular}{ | c | c | c | c | c | c | c | c | c | c | c | c | c | c | c | c | c | } \hline
				& $x$ & $gx$ & $g^2x$ & $g^3x$ & $hx$ &$ghx$ & $g^2hx$ & $g^3hx$ \\ \hline
				$\lambda_G=\varepsilon$  & $0$ & $0$ & $0$ & $0$ & $0$ & $0$ & $0$ & $0$ \\ \hline
				$\lambda_{\{1, g, g^2, g^3\}}$ & $0$ & $0$ & $0$ & $0$ & $0$ & $0$ & $0$ & $0$ \\ \hline
				$\lambda_{\{1, g^2, h, g^2h\}}$ & $0$ & $0$ & $0$ & $0$ & $0$ & $0$ & $0$ & $0$ \\ \hline
				$\lambda_{\{1, g^2h\}}$ & $0$ & $0$ & $0$ & $0$ & $0$ & $0$ & $0$ & $0$ \\ \hline
				$\lambda_{\{1, h\}}$ & $\alpha$ & $0$ & $\alpha$ & $0$ & $\alpha$ & $0$ & $\alpha$ & $0$ \\ \hline
				$\lambda_{\{1, g^2, gh, g^3h\}}$ & $0$ & $0$ & $0$ & $0$ & $0$ & $0$ & $0$ & $0$ \\ \hline
				$\lambda_{\{1, g^2\}}$ & $0$ & $0$ & $0$ & $0$ & $0$ & $0$ & $0$ & $0$ \\ \hline
				$\lambda_{\{1 \}}$ & $\alpha$ & $0$ & $0$ & $0$ & $0$ & $0$ & $\alpha$ & $0$ \\ \hline
			\end{tabular}\vspace{0.3cm}
			\caption{Partial actions of $\mathbf{H_{28}}$ on $\k$ $(\gamma^2=-1)$}\label{tab:H_28}\vspace{-0.4cm}
			\begin{tabular}{ | c | c | c | c | c | c | c | c | c | c | c | c | c | c | c | c | c | } \hline
				& $x$ & $gx$ & $g^2x$ & $g^3x$ & $hx$ &$ghx$ & $g^2hx$ & $g^3hx$ \\ \hline
				$\lambda_G=\varepsilon$  & $0$ & $0$ & $0$ & $0$ & $0$ & $0$ & $0$ & $0$ \\ \hline
				$\lambda_{\{1, g, g^2, g^3\}}$ & $0$ & $0$ & $0$ & $0$ & $0$ & $0$ & $0$ & $0$ \\ \hline
				$\lambda_{\{1, g^2, h, g^2h\}}$  & $\alpha$ & $\alpha$ & $\alpha$ & $\alpha$  & $\alpha$ & $\alpha$ & $\alpha$ & $\alpha$  \\ \hline
				$\lambda_{\{1, g^2h\}}$ & $\gamma$ & $0$ & $0$ & $\gamma$ & $0$ & $\gamma$ & $\gamma$ & $0$ \\ \hline
				$\lambda_{\{1, h\}}$ & $\gamma$ & $0$ & $0$ & $\gamma$ & $\gamma$ & $0$ & $0$ & $\gamma$ \\ \hline
				$\lambda_{\{1, g^2, gh, g^3h\}}$ & $0$ & $0$ & $0$ & $0$ & $0$ & $0$ & $0$ & $0$ \\ \hline
				$\lambda_{\{1, g^2\}}$ & $\alpha$ & $\alpha$ & $\alpha$ & $\alpha$ & $0$ & $0$ & $0$ & $0$ \\ \hline
				$\lambda_{\{1 \}}$ & $\gamma$ & $0$ & $0$ & $\gamma$ & $0$ & $0$ & $0$ & $0$ \\ \hline
			\end{tabular}\vspace{0.3cm}
			\caption{Partial actions of $\mathbf{H_{29}}$ on $\k$ $(\gamma^2=-1)$}\label{tab:H_29}\vspace{-0.4cm}
			\begin{tabular}{ | c | c | c | c | c | c | c | c | c | c | c | c | c | c | c | c | c | } \hline
				& $x$ & $gx$ & $g^2x$ & $g^3x$ & $hx$ &$ghx$ & $g^2hx$ & $g^3hx$ \\ \hline
				$\lambda_G=\varepsilon$  & $0$ & $0$ & $0$ & $0$ & $0$ & $0$ & $0$ & $0$ \\ \hline
				$\lambda_{\{1, g, g^2, g^3\}}$ & $\alpha$ & $\alpha$ & $\alpha$ & $\alpha$  & $\alpha$ & $\alpha$ & $\alpha$ & $\alpha$   \\ \hline
				$\lambda_{\{1, g^2, h, g^2h\}}$  & $0$ & $0$ & $0$ & $0$ & $0$ & $0$ & $0$ & $0$  \\ \hline
				$\lambda_{\{1, g^2, gh, g^3h\}}$ & $0$ & $0$ & $0$ & $0$ & $0$ & $0$ & $0$ & $0$ \\ \hline
				$\lambda_{\{1, g^2\}}$ & $\alpha$ & $0$ & $\alpha$ & $0$ & $0$ & $\alpha$ & $0$ & $\alpha$ \\ \hline
				$\lambda_{\{1 \}}$ & $\gamma$ & $0$ & $0$ & $0$ & $0$ & $0$ & $0$ & $\gamma$ \\ \hline
			\end{tabular}
		\end{table}	
	\end{center}
}
\end{prop}

\section{$\lambda$-Hopf algebras}\label{sec:lda_Hopf_Alg}
In this section, the notion of a $\lambda$-Hopf algebra is introduced.
Some results and examples are developed.
In particular, we relate $\lambda$-Hopf algebras with a special type of partial matched pairs and we use the new concept to produce new examples of them.
Subsection \ref{Sec: Taft como lambda Hopf} is dedicated to present Taft's algebra as a $\lambda$-Hopf algebra.
Finally, the $\lambda$-Hopf algebras produced with the partial actions of pointed non-semisimple Hopf algebras with dimension 8, 16 on their base field are calculated in the last subsection.

\subsection{Definition and Properties}
Recall from Definition \ref{algebra_smash} that, given a partial action $\lambda: H \longrightarrow \k$, the corresponding partial smash product algebra $\underline{ \k \# H}$ is the algebra generated by the set $\left\{\underline{ 1_\k \# h} \, | \, h \in H \right\} = \left\{ 1_\k \# \lambda(h_1)h_2 \, | \, h \in H \right\}$.
Moreover $\underline{ 1_\k \# h} =  1_\k \# \lambda(h_1)h_2 $ and $ \underline{ 1_\k \# h} \, \, \underline{ 1_\k \# k} = \underline{ 1_\k \# \lambda(h_1)h_2k}$ for all $h, k \in H$.
Then, $\varphi: \underline{ \k \# H} \longrightarrow H$ given by $\varphi \left(1_\k \# \lambda(h_1)h_2\right) = \lambda(h_1)h_2$ is an injective homomorphism of algebras.
We denote by $H_\lambda$ the subalgebra of $H$ given by the image of $\varphi$, that is, $H_\lambda = \varphi\left(\underline{ \k \# H}\right) \subseteq H$.
Hence, $H_\lambda = \{\lambda(h_1)h_2 \, | \, h \in H \}$ and so $\underline{ \k \# H} \simeq H_\lambda $ is a subalgebra of $H$.
However, it does not always occur that this subalgebra is a Hopf subalgebra, neither a subbialgebra nor a subcoalgebra of $H$.
We exhibit two examples to illustrate this situation:

\begin{exa}\label{grupo_eh_Hlda}
	Let $G$ be a group and $\lambda_N$ as in Example \ref{lda_kg}.
	Then, $(\k G)_{\lambda_{N}} \simeq \k N$ and consequently a Hopf subalgebra of $\k G$.
\end{exa}

\begin{exa}\label{Sweedler's_nao_Hlda}
	Assume $char(\k) \neq 2$ and let $\mathbb{H}_4$ be the Sweedler's Hopf algebra.
	Consider the partial action $\lambda_\alpha$ as in Example \ref{lda_sweedler}, \emph{i.e.}, $\lambda_\alpha: \mathbb{H}_4 \longrightarrow \k$, $\lambda_\alpha(1)=1_\k,$ $\lambda_\alpha(g)=0$ and $\lambda_\alpha(x) = \lambda_\alpha(gx) = \alpha$, where $\alpha \in \k$.
	
	The subalgebra $(\mathbb{H}_4)_{\lambda_\alpha} = \k \{1\} \oplus \k \{ \alpha g + gx\}$
	is not a subcoalgebra of $\mathbb{H}_4$, since
	$$\Delta(\alpha g + gx) = (\alpha g + gx) \o g + 1 \o gx \notin (\mathbb{H}_4)_{\lambda_\alpha} \o (\mathbb{H}_4)_{\lambda_\alpha}.$$
\end{exa}

The previous examples show that for a given Hopf algebra $L$ and a partial action $\lambda \in L^*$, $L_\lambda$ is not always a Hopf subalgebra of $L$.
Thus, we are interested in deciding when a Hopf algebra $H$ is obtained in this way, that is, when $H \simeq L_\lambda$, for some Hopf algebra $L$ and partial action $\lambda \in L^*$.
In this sense, we define:

\begin{definition}\label{lda_Hopf}
	We say that a Hopf algebra $H$ is a \emph{(left) $\lambda$-Hopf algebra} if there exists a pair $(L, \lambda)$ where $L$ is a Hopf algebra and $\lambda: L \longrightarrow \k$ is a (left) partial action such that $L_\lambda \simeq H$.
\end{definition}

	Since $\varepsilon$ is a partial action (global, in fact) and $\varepsilon(h_1)h_2 = h$, for all $h \in H$, we have that $H_\varepsilon = H$. Then, every Hopf algebra $H$ is an $\varepsilon$-Hopf algebra in the usual way.

\smallbreak

In the next example, we prove that $\k G$ is a $\lambda$-Hopf algebra, for $\lambda$ other than $\varepsilon$.
\begin{exa}\label{G_eh_lda_Hopf}
	Let $G$ be a group.
	Consider a group $F$ that contains $G$ as proper subgroup and the partial action $\lambda_G : \k F \longrightarrow \k$.
	It is clear that $(\k F)_{\lambda_{G}} \simeq \k G$.
	Moreover, since $F$ can be chosen arbitrarily, we do not have uniqueness for a Hopf algebra $L$ such that $L_\lambda \simeq \k G$.
	In particular, we can consider $F$ as the group $G \times C_\ell$, where $C_\ell$ denotes the cyclic group of order $\ell$, $\ell \geq 1 $.
	In this case, $dim_\k(\k F) = \ell \ dim_\k(\k G)$.
\end{exa}

Now our goal is to determine necessary and sufficient conditions to conclude whether $H_\lambda$ is a Hopf subalgebra of $H$ or not.
First, we have the following lemma:

\begin{lema}\label{restricao}
	Let $\lambda: H \longrightarrow \k$ be a partial action of $H$ on $\k$.
	Then, $\lambda{|_{H_{\lambda}}} = \varepsilon{|_{H_{\lambda}}}$.
\end{lema}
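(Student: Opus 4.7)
The plan is to take an arbitrary element of $H_\lda$, which by definition has the form $\lda(h_1)h_2$ for some $h \in H$, and show that evaluating $\lda$ on it gives the same answer as evaluating $\varepsilon$ on it. Both evaluations should turn out to equal $\lda(h)$.

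First I would compute the $\varepsilon$ side, which is immediate: by linearity and the counit axiom,
\begin{equation*}
\varepsilon\bigl(\lda(h_1)h_2\bigr) = \lda(h_1)\varepsilon(h_2) = \lda\bigl(h_1\varepsilon(h_2)\bigr) = \lda(h).
\end{equation*}
Then I would handle the $\lda$ side the same way:
\begin{equation*}
\lda\bigl(\lda(h_1)h_2\bigr) = \lda(h_1)\lda(h_2).
\end{equation*}
The job is now to show $\lda(h_1)\lda(h_2) = \lda(h)$.

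For this I would specialize the defining relation \eqref{eqn_lda} of Proposition \ref{k_mod_alg_parc} at $k = 1_H$: it gives $\lda(h)\lda(1_H) = \lda(h_1)\lda(h_2\cdot 1_H) = \lda(h_1)\lda(h_2)$, and since $\lda(1_H) = 1_\k$ the left side is just $\lda(h)$. Thus $\lda(h_1)\lda(h_2) = \lda(h)$, which matches the $\varepsilon$-side computation and gives $\lda|_{H_\lda} = \varepsilon|_{H_\lda}$.

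There is no genuine obstacle here; the content of the lemma is essentially the idempotency observation already noted after Proposition \ref{k_mod_alg_parc} (namely that $\lda$ is idempotent in $H^*$ with respect to convolution), repackaged as a pointwise equality on the image $H_\lda$. The only thing to watch is to phrase the argument using Sweedler notation consistently and to invoke $\lda(1_H) = 1_\k$ at the right moment.
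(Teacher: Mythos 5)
Your proof is correct and follows essentially the same route as the paper's: evaluate both $\lda$ and $\varepsilon$ on a generic element $\lda(h_1)h_2$ of $H_\lda$ and show both equal $\lda(h)$. You additionally spell out the justification for $\lda(h_1)\lda(h_2)=\lda(h)$ (specializing \eqref{eqn_lda} at $k=1_H$, i.e.\ the idempotency of $\lda$), a step the paper's proof uses implicitly.
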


\begin{proof}
	Consider $\lambda(h_1)h_2 \in H_\lambda$.
	Then, $\lambda(\lambda(h_1)h_2 ) = \lambda(h_1)\lambda(h_2) = \lambda(h).$
	On the other hand, $\varepsilon(\lambda(h_1)h_2 ) = \lambda(h_1)\varepsilon(h_2) = \lambda(h_1\varepsilon(h_2) ) = \lambda(h).$
Thus, $\lambda{|_{H_{\lambda}}} = \varepsilon{|_{H_{\lambda}}}$.
\end{proof}

Thus, in the subalgebra $H_\lambda$, the maps $\varepsilon$ and $\lambda$ coincide.
In particular, $\lambda$ is multiplicative on $H_\lambda$, and so if $H_\lambda$ is a subcoalgebra (or subbialgebra or Hopf subalgebra) of $H$, then the counit of $H_\lambda$ is given exactly by $\lambda{|_{H_{\lambda}}} = \varepsilon{|_{H_{\lambda}}}$.
Hence, we obtain the following characterization:

\begin{theorem}\label{carac}
	Let $H$ be a finite-dimensional Hopf algebra and $\lambda: H \longrightarrow \k$ a partial action.
	Then, $H_\lambda $ is a Hopf subalgebra of $H$ if and only if
	\begin{align}\label{eq_carac}
		\lambda(h_1)h_2 = \lambda(h_1)h_2\lambda(h_3)
	\end{align}
	holds for all $ h \in H$.
\end{theorem}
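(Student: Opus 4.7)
The plan is to reduce all Hopf-subalgebra axioms on $H_\lda$ to a single subcoalgebra check. Since $H_\lda = \varphi(\underline{\k \# H})$ is already a unital subalgebra of $H$, and since the finite-dimensional hypothesis on $H$ ensures (by the standard result that every finite-dimensional subbialgebra of a Hopf algebra is automatically closed under the antipode) that the antipode condition comes for free once the coproduct axiom is established, the only non-trivial condition to verify is $\Delta(H_\lda) \subseteq H_\lda \otimes H_\lda$; the counit axiom on $H_\lda$ is immediate because Lemma \ref{restricao} says $\varepsilon|_{H_\lda} = \lda|_{H_\lda}$. Thus Theorem \ref{carac} reduces to the equivalence ``$\Delta(H_\lda) \subseteq H_\lda \otimes H_\lda$ if and only if \eqref{eq_carac} holds.''

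For the direction $(\Leftarrow)$, I assume \eqref{eq_carac} and check the coproduct condition on a generator $y = \lda(h_1)h_2 \in H_\lda$. Pulling the scalar out of $\Delta$ gives $\Delta(y) = \lda(h_1) h_2 \otimes h_3$ using the three-fold coassociative splitting of $h$. Substituting the left tensor factor via \eqref{eq_carac} (after coassociativity promotes indices to a four-fold splitting $h_1 \otimes h_2 \otimes h_3 \otimes h_4$, so that $\lda(h_1)h_2 = \lda(h_1)h_2\lda(h_3)$) and moving the scalar $\lda(h_3)$ across the tensor sign rewrites $\Delta(y)$ as $\lda(h_1)h_2 \otimes \lda(h_3)h_4$. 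Regrouping the Sweedler slots in pairs according to $\Delta(h) = h_1 \otimes h_2$ with $\Delta(h_i) = h_{i,1} \otimes h_{i,2}$ identifies this with $(\lda(h_{1,1}) h_{1,2}) \otimes (\lda(h_{2,1}) h_{2,2}) \in H_\lda \otimes H_\lda$, as required.

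For the direction $(\Rightarrow)$, suppose $\Delta(H_\lda) \subseteq H_\lda \otimes H_\lda$ and pick $y = \lda(h_1)h_2 \in H_\lda$. Writing $\Delta(y) = \sum u \otimes v$ with $u, v \in H_\lda$, I apply $\mathrm{id} \otimes \lda$ and use Lemma \ref{restricao} to replace $\lda(v)$ by $\varepsilon(v)$, obtaining $\sum u\,\varepsilon(v) = y$. On the other hand, a direct computation from $\Delta(y) = \lda(h_1)h_2 \otimes h_3$ yields $(\mathrm{id}\otimes\lda)\Delta(y) = \lda(h_1)h_2\lda(h_3)$. Equating the two expressions produces exactly \eqref{eq_carac}.

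The main technical obstacle is the coassociativity-based Sweedler manipulation in the ``if'' direction, and in particular verifying that the three-fold expression $\lda(h_1)h_2 \otimes \lda(h_3)h_4$ coming from \eqref{eq_carac} coincides with the iterated two-fold expression $(\lda(h_{1,1})h_{1,2}) \otimes (\lda(h_{2,1})h_{2,2})$ that witnesses membership in $H_\lda \otimes H_\lda$. Modulo this careful indexing, both directions are short applications of \eqref{eq_carac} and Lemma \ref{restricao}, and the finite-dimensional hypothesis enters only at the very end, to promote ``subbialgebra'' to ``Hopf subalgebra.''
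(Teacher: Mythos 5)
Your proposal is correct and follows essentially the same route as the paper: the forward implication is the identical Sweedler manipulation $\lda(h_1)h_2\otimes h_3 \mapsto \lda(h_1)h_2\otimes\lda(h_3)h_4$ showing $\Delta(H_\lda)\subseteq H_\lda\otimes H_\lda$, followed by the finite-dimensional subbialgebra-to-Hopf-subalgebra upgrade, and the converse is the same application of $(\mathrm{id}\otimes\lda)\circ\Delta$ together with Lemma \ref{restricao} and the counit axiom. The only cosmetic difference is that you extract \eqref{eq_carac} from the weaker hypothesis that $H_\lda$ is merely a subcoalgebra, whereas the paper invokes the counit axiom of $H_\lda$ as a Hopf algebra; both are valid.
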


\begin{proof}
	First, suppose $\lambda(h_1)h_2 = \lambda(h_1)h_2\lambda(h_3)$ for all $h \in H$.
	We already have that $H_\lambda$ is a subalgebra of $H$.
	Consider the restriction of the comultiplication $\Delta_H$ of $H$ to $H_\lambda$, that is, $\Delta_{H}{|_{H_\lambda}} : H_\lambda \longrightarrow H \o H$.
	We denote $\Delta = \Delta_{H}{|_{H_\lambda}}$ and shall see $\Delta(H_\lambda) \subseteq H_\lambda \o H_\lambda$.
	Note that
	$$\Delta(\lambda(h_1)h_2) = \lambda(h_1)\Delta(h_2) = \lambda(h_1)h_2 \o h_3 = \lambda(h_{11})h_{12} \o h_2. $$
	Now, using the hypothesis, we get $\lambda(h_{11})h_{12} \o h_2 = \lambda(h_{11})h_{12}\lambda(h_{13}) \o h_2,$
	and so
	\begin{align*}
		\lambda(h_{11})h_{12}\lambda(h_{13}) \o h_2 & = \lambda(h_1)h_2 \lambda(h_3) \o h_4 \\
		& = \lambda(h_1)h_2 \o \lambda(h_3)h_4 \\
		& =\lambda(h_{11})h_{12} \o \lambda(h_{21}) h_{22}.
	\end{align*}
	
	Therefore, $H_\lambda$ is closed under the comultiplication $\Delta$ and, since we are dealing with restriction maps, we obtain that $(H_\lambda, \Delta, \varepsilon|{_{H_{\lambda}}})$ is a subcoalgebra of $H$.
	Moreover, $H_\lambda$ is a subbialgebra of $H$ and, since $H$ is finite dimensional, it follows that $H_\lambda$ is a Hopf subalgebra of $H$ (\emph{cf.} \cite[Proposition 7.6.1]{radford}).
	
	Conversely, suppose that $H_\lambda$ is a Hopf subalgebra of $H$, that is,
	$\Delta_{H_{\lambda}} = \Delta_{H}{|_{H_{\lambda}}}$ and $\varepsilon_{H_{\lambda}} = \varepsilon_{H}{|_{H_{\lambda}}}$.
	Then, $H_\lambda$ is a Hopf algebra and, in particular, $ \psi \circ (Id_{H_{\lambda}} \o \varepsilon_{H_{\lambda}}) \circ \Delta_{H_{\lambda}} = Id_{H_{\lambda}}$ holds,  where $\psi:H_\lambda \o \k \longrightarrow H_\lambda$ is the canonical isomorphism.
	Using Lemma \ref{restricao}, we have that
	$\lambda{|_{H_{\lambda}}} = \varepsilon_{H_{\lambda}}$ and then $ \psi \circ (Id_{H_{\lambda}} \o \lambda{|_{H_{\lambda}}}) \circ \Delta_{H_{\lambda}} = Id_{H_{\lambda}}$ holds.
	Thus, given $\lambda(h_1)h_2 \in H_\lambda$, it follows that
	\begin{align*}
		\lambda(h_1)h_2 = & Id_{H_{\lambda}}(\lambda(h_1)h_2) \\
		= & [\psi \circ (Id_{H_{\lambda}} \o \lambda{|_{H_{\lambda}}}) \circ \Delta_{H_{\lambda}}](\lambda(h_1)h_2 ) \\
		= & \lambda(h_1) h_2 \lambda(h_3).
	\end{align*}
	Therefore, $\lambda(h_1)h_2 = \lambda(h_1)h_2\lambda(h_3)$ holds for all $h \in H$.
\end{proof}

\begin{cor}\label{x_skew}
		Let $H$ be a finite-dimensional Hopf algebra and $\lambda: H \longrightarrow \k$ a partial action.
	Suppose $x \in P_{g,h}(H) \setminus \k\{g-h\}$ with $\lambda(g)=0$ and $\lambda(h)=1$.
	Then, $\lambda(x_1)x_2 \neq \lambda(x_1)x_2 \lambda(x_3)$ and, consequently, $H_\lambda$ is not a Hopf subalgebra of $H$.
\end{cor}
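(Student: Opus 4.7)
The plan is to exhibit $x$ itself as the obstruction to equation \eqref{eq_carac}, and then invoke Theorem \ref{carac} to conclude. The point is that once we compute both sides of \eqref{eq_carac} evaluated at $x$, the assumed values $\lda(g)=0$ and $\lda(h)=1$ trivialize most terms and the remaining equation forces $x$ to be a trivial skew-primitive.

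First, I would use the skew-primitivity to unpack the Sweedler notation. From $\Delta(x) = x \o g + h \o x$, coassociativity yields
\begin{equation*}
x_1 \o x_2 \o x_3 = x \o g \o g + h \o x \o g + h \o h \o x.
\end{equation*}
Applying $\lda$ in the first slot and using $\lda(h)=1$, I obtain
$\lda(x_1)x_2 = \lda(x)g + x$.
Applying $\lda$ in the first and third slots, and using $\lda(g)=0$ and $\lda(h)=1$, the first two summands vanish and only $\lda(h)\,h\,\lda(x)$ survives, giving
$\lda(x_1)x_2\lda(x_3) = \lda(x)\,h$.

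If \eqref{eq_carac} held for this $h := x$, we would have $\lda(x)g + x = \lda(x)h$, i.e.\ $x = \lda(x)(h-g) \in \k\{g-h\}$. This contradicts the hypothesis $x \notin \k\{g-h\}$ (note the case $\lda(x)=0$ would force $x=0$, also excluded). Hence \eqref{eq_carac} fails at $x$, and Theorem \ref{carac} immediately implies that $H_\lda$ is not a Hopf subalgebra of $H$.

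There is no serious obstacle here: the argument is a direct computation once the iterated coproduct of $x$ is written out, and the only subtlety is bookkeeping the two trivial cases $\lda(x)=0$ and $\lda(x)\neq 0$ within the single conclusion $x \in \k\{g-h\}$.
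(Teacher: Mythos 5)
Your proof is correct and follows essentially the same route as the paper: compute $\Delta_2(x)$, evaluate both sides of \eqref{eq_carac} at $x$ to get $\lda(x)g + x$ versus $\lda(x)h$, and conclude that equality would force $x \in \k\{g-h\}$, contradicting the hypothesis. The only difference is cosmetic (your explicit remark about the case $\lda(x)=0$), so nothing further is needed.
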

\begin{proof}
	Since $\Delta(x)= x \o g + h \o x$ and $\Delta_2(x) = x \o g \o g + h \o x \o g + h \o h \o x$, where $\Delta_2 = (\Delta \otimes Id_H) \circ \Delta = (Id_H \otimes \Delta) \circ \Delta$, we obtain $\lambda(x_1)x_2= \lambda(x)g + \lambda(h)x = \lambda(x)g + x$.
	On the other hand, $\lambda(x_1)x_2\lambda(x_3) = \lambda(x)g\lambda(g) +\lambda(h)x\lambda(g) + \lambda(h)h\lambda(x) = \lambda(x)h.$
	As $x \neq \lambda(x)(h-g)$, we conclude $\lambda(x_1)x_2 \neq \lambda(x_1)x_2 \lambda(x_3)$.
	
	Thus, Theorem \ref{carac} implies that $H_\lambda$ is not a Hopf subalgebra of $H$.
\end{proof}

The most important feature in the previous corollary is that $x \in H$ is a non-trivial $(g,h)$-primitive element such that $\lambda(g) \neq \lambda(h)$.
We obtain a complementary result for the case when $x$ is a non-trivial $(g,h)$-primitive element, but $\lambda(g)=1$ and $\lambda(h)=0$.

\begin{cor}\label{x_skew2}
	Let $H$ be a finite-dimensional Hopf algebra and $\lambda: H \longrightarrow \k$ a partial action.
	If $x \in P_{g,h}(H)\setminus \k \{g-h\}$ with $\lambda(g)=1$ and $\lambda(h)=0$,
	then $H_\lambda$ is not a Hopf subalgebra of $H$.
\end{cor}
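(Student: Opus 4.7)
The plan is to reduce the claim to Corollary \ref{x_skew}. A first attempt to copy the computation carried out there for $x$ itself fails: substituting $\lda(g)=1$ and $\lda(h)=0$ into the analogous formulas gives both $\lda(x_1)x_2 = \lda(x)g + \lda(h)x = \lda(x)g$ and $\lda(x_1)x_2\lda(x_3) = \lda(x)g\lda(g) = \lda(x)g$, so condition \eqref{eq_carac} happens to hold at $x$ itself and a different witness of its failure must be produced.

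The natural candidate is the twisted element $y := xh^{-1}$, already flagged by Lemma \ref{propriedades_lda}\,(d). I would first compute
$$\Delta(y) = \Delta(x)\Delta(h^{-1}) = (x\o g + h \o x)(h^{-1}\o h^{-1}) = xh^{-1}\o gh^{-1} + 1\o xh^{-1},$$
which shows $y \in P_{gh^{-1},\,1}(H)$; that is, $y$ is a skew-primitive element between the group-likes $gh^{-1}$ and $1$.

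Next I would verify that $y$ satisfies the hypotheses of Corollary \ref{x_skew}. By Proposition \ref{N_subgrupo}, $N=\{t\in G(H)\mid \lda(t)=1\}$ is a subgroup of $G(H)$; since $g\in N$ while $h\notin N$, an immediate coset argument gives $gh^{-1}\notin N$, so $\lda(gh^{-1})=0$, whereas $\lda(1)=1$. Non-triviality of $y$ is inherited from that of $x$: if $y=\alpha(gh^{-1}-1)$ for some $\alpha \in \k$, then right-multiplication by $h$ yields $x=\alpha(g-h)$, contradicting $x \notin \k\{g-h\}$. With these checks in place, Corollary \ref{x_skew} applied to $y$ delivers the conclusion.

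The only mild obstacle is the observation that $x$ itself fails to witness the failure of \eqref{eq_carac}, so the argument cannot be a literal mirror image of the proof of Corollary \ref{x_skew}; once one identifies twisting $x$ by $h^{-1}$ as the right move, the statement reduces cleanly to the already-proven corollary.
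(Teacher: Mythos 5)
Your proof is correct and follows essentially the same route as the paper: pass to $y = xh^{-1} \in P_{gh^{-1},1}(H)\setminus\k\{gh^{-1}-1\}$, note $\lda(gh^{-1})=0$ via Proposition \ref{N_subgrupo}, and invoke Corollary \ref{x_skew}. Your additional checks (the explicit computation of $\Delta(y)$, the coset argument, the non-triviality of $y$, and the observation that $x$ itself does not violate \eqref{eq_carac}) are all sound and merely make explicit what the paper leaves implicit.
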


\begin{proof}
	Consider $y = xh^{-1}$.
	Note that $y \in P_{gh^{-1},1}(H)\setminus \k \{gh^{-1}-1\}$ and $\lambda(gh^{-1})=0$ by Proposition \ref{N_subgrupo}.
	Hence, the result follows by Corollary \ref{x_skew}.
\end{proof}

\begin{cor}\label{nao_H_lda}
	Let $H$ be a finite-dimensional Hopf algebra and $\lambda: H \longrightarrow \k$ a partial action.
	If there exists $x \in P_{g,h}(H) \setminus \k \{g-h\}$ such that $\lambda(g) \neq \lambda(h)$, then $H_\lambda$ is not a Hopf subalgebra of $H$.
\end{cor}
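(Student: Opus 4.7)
The plan is to reduce the statement to the two previously established corollaries \ref{x_skew} and \ref{x_skew2} via a case analysis based on the values $\lda(g)$ and $\lda(h)$. Since $g, h \in G(H)$, Proposition \ref{N_subgrupo} forces $\lda(g), \lda(h) \in \{0, 1_\k\}$. The hypothesis $\lda(g) \neq \lda(h)$ therefore leaves only two possibilities: either $\lda(g) = 0$ and $\lda(h) = 1_\k$, or $\lda(g) = 1_\k$ and $\lda(h) = 0$.

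In the first case, the non-trivial $(g,h)$-primitive element $x \in P_{g,h}(H) \setminus \k\{g - h\}$ satisfies exactly the hypotheses of Corollary \ref{x_skew}, which yields directly that $H_\lda$ is not a Hopf subalgebra of $H$. In the second case, the same $x$ satisfies the hypotheses of Corollary \ref{x_skew2}, yielding the same conclusion. (Recall that Corollary \ref{x_skew2} is itself proved by reducing to Corollary \ref{x_skew} via the substitution $y = x h^{-1} \in P_{gh^{-1},1}(H)$ together with Proposition \ref{N_subgrupo}.)

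Hence, in either case, $H_\lda$ fails to be a Hopf subalgebra of $H$, which is precisely what we wanted to prove. There is no real obstacle here: the work was carried out in Corollaries \ref{x_skew} and \ref{x_skew2}, and the present corollary is essentially a packaging statement that unifies both situations under the single hypothesis $\lda(g) \neq \lda(h)$, the dichotomy being guaranteed by Proposition \ref{N_subgrupo}. The proof will thus amount to one short paragraph invoking these three earlier results.
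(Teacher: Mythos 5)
Your proposal is correct and matches the paper's own argument, which simply cites Corollaries \ref{x_skew} and \ref{x_skew2}; your only addition is to make explicit the dichotomy $\lda(g),\lda(h)\in\{0,1_\k\}$ from Proposition \ref{N_subgrupo}, which is exactly the intended reading.
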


\begin{proof}
	Straightforward from corollaries \ref{x_skew} and \ref{x_skew2}.
\end{proof}

\medbreak

The results above deal with $x \in P_{g,h}(H)$ such that $\lambda(g) \neq \lambda(h)$.
The next one deals when $\lambda(g) = \lambda(h)$.
\begin{cor}
	Let $H$ be a Hopf algebra and $\lambda: H \longrightarrow \k$ a partial action. Then,
	\begin{enumerate}
		\item if $g \in G(H)$ then
		$\lambda(g_1)g_2=\lambda(g_1)g_2\lambda(g_3);$
		\item if $x \in P_{g,h}(H)$ such that $\lambda(g)=\lambda(h)$, then $\lambda(x_1)x_2=\lambda(x_1)x_2\lambda(x_3).$
	\end{enumerate}
	In particular, if $H$ is finite-dimensional and has a basis given only by group-like and $(g_i,h_i)$-primitive elements, with $g_i,h_i \in G(H)$ such that $\lambda(g_i)=\lambda(h_i)$ for all $i$, then $H_\lambda$ is a Hopf subalgebra of $H$.
\end{cor}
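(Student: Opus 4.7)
The plan is to verify each part by direct coproduct computation, exploiting the known values of $\lda$ on group-like and suitable skew-primitive elements, and then apply Theorem \ref{carac} to conclude the last assertion.

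For (1), I would start with $g \in G(H)$, so $\Delta(g) = g \otimes g$ and $\Delta_2(g) = g \otimes g \otimes g$. Thus $\lda(g_1)g_2 = \lda(g)g$ while $\lda(g_1)g_2\lda(g_3) = \lda(g)^2 g$. By Proposition \ref{N_subgrupo}, $\lda(g) \in \{0,1_\k\}$, so $\lda(g)^2 = \lda(g)$, which gives the desired equality.

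For (2), let $x \in P_{g,h}(H)$ with $\lda(g) = \lda(h) =: c$. Coassociativity yields
\begin{align*}
\Delta_2(x) = x \otimes g \otimes g + h \otimes x \otimes g + h \otimes h \otimes x,
\end{align*}
so $\lda(x_1)x_2 = \lda(x)g + \lda(h)x$ and $\lda(x_1)x_2\lda(x_3) = \lda(x)\lda(g)g + \lda(h)\lda(g)x + \lda(h)\lda(x)h$. Now Lemma \ref{propriedades_lda}(b) applies: since $\lda(g)=\lda(h)$, we have $\lda(x)=0$. Both sides then collapse to $cx$, since $c^2 = c$. This establishes (2).

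For the final clause, I would apply the two items to the basis of $H$: every basis element $b$ satisfies $\lda(b_1)b_2 = \lda(b_1)b_2\lda(b_3)$, either by (1) when $b$ is group-like or by (2) when $b$ is $(g_i,h_i)$-primitive with $\lda(g_i)=\lda(h_i)$. The identity $\lda(h_1)h_2 = \lda(h_1)h_2\lda(h_3)$ is linear in $h$, so it extends from the basis to all $h \in H$. Since $H$ is finite-dimensional, Theorem \ref{carac} then ensures that $H_\lda$ is a Hopf subalgebra of $H$. There is no real obstacle here; the only subtlety is recognizing that Lemma \ref{propriedades_lda}(b) forces $\lda(x)=0$ in case (2), which makes the remaining scalar comparison trivial after using $\lda(g)^2=\lda(g)$.
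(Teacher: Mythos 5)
Your proof is correct and follows essentially the same route as the paper: Proposition \ref{N_subgrupo} gives $\lda(g)^2=\lda(g)$ for item (1), Lemma \ref{propriedades_lda}(b) forces $\lda(x)=0$ so both sides of (2) collapse to $\lda(g)x$, and the last clause follows by linearity together with Theorem \ref{carac}. The paper's write-up is merely terser; no substantive difference.
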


\begin{proof}
	Recall from Proposition \ref{N_subgrupo} that $\lambda(g) \in \{0, 1_\k\}$.
	Then, item (1) is clear.
	Now, note that $\lambda(x)=0$ by Lemma \ref{propriedades_lda} (b), then
	$\lambda(x_1)x_2 = \lambda(g) x $ and $\lambda(x_1)x_2 \lambda(x_3) = \lambda(g)^2 x$. Thus, (2) holds.
	Finally, the last statement in the corollary is clear by Theorem \ref{carac}.
\end{proof}

\medbreak

Using these results, sometimes we can quickly check whether $H_\lambda$ is a Hopf subalgebra of $H$ or not.
For instance, consider $(\mathbb{H}_4)_{\lambda_\alpha}$ as in Example \ref{Sweedler's_nao_Hlda}.
Since $x \in P_{1,g}(\mathbb{H}_4)$ and $\lambda_\alpha(1) \neq \lambda_\alpha(g)$, it follows that $(\mathbb{H}_4)_{\lambda_\alpha}$ is not a Hopf subalgebra of $\mathbb{H}_4$.
We present in Subsection \ref{Subsec:exemplos} the whole setting for the partial actions computed in Subsection \ref{Subsec: 8}.

\medbreak

We exhibit now a condition stronger than \eqref{eq_carac}.
\begin{prop}\label{cond_mais_forte}
	Let $\lambda: H \longrightarrow \k$ be a partial action.
	If $\lambda(h_1)h_2=h_1 \lambda(h_2)$ for all $h \in H$, then $\lambda(h_1)h_2=\lambda(h_1) h_2 \lambda(h_3)$ for all $h \in H$.
	In this case, if $H$ is finite-dimensional, then
	$H_\lambda$ is a Hopf subalgebra of $H$.
\end{prop}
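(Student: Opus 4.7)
The plan is to derive the weaker identity \eqref{eq_carac} from the stronger hypothesis by combining two ingredients: a coassociativity argument that lets us apply the hypothesis ``in the middle'' of a three-factor Sweedler expansion, together with the fact that $\lda$ is idempotent in the convolution algebra $H^*$ (as already noted right after Proposition \ref{k_mod_alg_parc}).

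First I would observe that the hypothesis $\lda(u_1)u_2 = u_1\lda(u_2)$ holds for every $u \in H$, and in particular for the Sweedler component $u = h_2$ of a given $h \in H$. By coassociativity, $\Delta(h_2)$ may be identified with $h_2 \otimes h_3$ inside $\Delta_2(h) = h_1 \otimes h_2 \otimes h_3$, so substituting into the hypothesis yields
\[
\lda(h_2)\, h_3 \;=\; h_2\, \lda(h_3).
\]
Multiplying both sides on the left by the scalar $\lda(h_1)$ produces
\[
\lda(h_1)\,\lda(h_2)\, h_3 \;=\; \lda(h_1)\, h_2\, \lda(h_3).
\]

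Next I would simplify the left-hand side using the idempotency of $\lda$ in $H^*$. Setting $k = 1_H$ in \eqref{eqn_lda} gives the identity $\lda(u) = \lda(u_1)\lda(u_2)$ for all $u \in H$. Applying this with $u = h_1$ and rewriting $\Delta_2(h)$ via $(\Delta \otimes \mathrm{id})\Delta$, one obtains
\[
\lda(h_1)\,\lda(h_2)\, h_3 \;=\; \lda\bigl((h_1)_1\bigr)\,\lda\bigl((h_1)_2\bigr)\, h_2 \;=\; \lda(h_1)\, h_2.
\]
Putting the two displays together yields exactly $\lda(h_1)h_2 = \lda(h_1)h_2\lda(h_3)$, which is \eqref{eq_carac}. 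The final assertion (that $H_\lda$ is a Hopf subalgebra whenever $H$ is finite dimensional) then follows immediately from Theorem \ref{carac}.

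The computation itself is light and I do not anticipate any real obstacle; the only care needed is bookkeeping in Sweedler notation, so that the hypothesis is legitimately invoked on the middle factor $h_2$ (using $(\mathrm{id} \otimes \Delta)\Delta$) while the idempotency identity is invoked on the first factor $h_1$ (using $(\Delta \otimes \mathrm{id})\Delta$), with coassociativity ensuring that both re-splittings refer to the same element of $H^{\otimes 3}$.
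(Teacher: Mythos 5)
Your argument is correct and is essentially the same as the paper's: both apply the hypothesis to the middle Sweedler factor of $\Delta_2(h)$ (turning $h_2\lda(h_3)$ into $\lda(h_2)h_3$), regroup by coassociativity, and then collapse $\lda(h_1)\lda(h_2)$ to $\lda(h_1)$ via the idempotency identity obtained from \eqref{eqn_lda} with $k=1_H$. The only cosmetic difference is that the paper starts from $\lda(h_1)h_2\lda(h_3)$ and simplifies, whereas you build up from the hypothesis; the chain of equalities is identical.
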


\begin{proof}
	Suppose that $\lambda(h_1)h_2=h_1 \lambda(h_2)$, for all $h \in H$.
	Then
	\begin{align*}
		\lambda(h_1) h_2 \lambda(h_3) & = \lambda(h_1) (h_2)_1 \lambda((h_2)_2) \\
		& = \lambda(h_1) \lambda((h_2)_1) (h_2)_2 \\
		&  = \lambda((h_1)_1) \lambda((h_1)_2)h_2 \\
		&  = \lambda(h_1) h_2,
	\end{align*}
	where the last equality follows from \eqref{eqn_lda}.
	Thus, if $H$ is finite-dimensional, it follows by Theorem \ref{carac} that $H_\lambda$ is a Hopf subalgebra of $H$.
\end{proof}

	The hypotheses of the previous proposition are stronger than those of Theorem \ref{carac}.
	However, checking $\lambda(h_1)h_2=h_1 \lambda(h_2)$ sometimes it is easier than checking $\lambda(h_1)h_2\lambda(h_3)=\lambda(h_1) h_2$, for $h \in H$.
	It happens because for the former equality is necessary to use $\Delta$ only once, while the latter needs $\Delta_2$.

\medbreak

Now we shall see that every Hopf algebra is a $\lambda$-Hopf algebra, for some $\lambda$ distinct from its counit.
For this purpose, we recall how partial actions of $H \o L$ on $A \o B$ are canonically obtained from partial actions of $H$ and $L$ on $A$ and $B$, respectively.

\begin{rem}\label{produto_acoes}
	Let $H$ and $L$ be two Hopf algebras. Consider $\cdot_H$ and $\cdot_L$ partial actions of $H$ and $L$ on algebras $A$ and $B$, respectively.
	Then, the linear map $\rightharpoonup = (\cdot_H \o \cdot_L) \circ (Id_{H} \o \tau_{L,A} \o Id_B )$ is a partial action of $H \o L$ on $A\o B$, where $\tau_{L,A} : L \o A \longrightarrow A \o L $ is the canonical twist isomorphism between $L$ and $A$.
	Moreover, if $\cdot_H$ and $\cdot_L$ are symmetric, then so is $\rightharpoonup$.
	In particular, since $\k \simeq \k \o \k$, we obtain partial actions of $H \o L$ on $\k$ from partial actions of $H$ and $L$ on $\k$.
\end{rem}

We emphasize that the partial actions of $H \o L$ on $A \o B$, constructed as above, in general do not cover all partial actions of $H \o L$ on $A \o B$, even when $A = B = \k$.
As a very simple example, take $A=B=\k$ and $H = L = \k C_2$.
Then, $H \o L = \k C_2 \o \k C_2 \simeq \k( C_2 \times C_2)$.
Recall, by Example \ref{lda_kg}, that a partial action of $\k (C_2 \times C_2)$ on $\k$ is parametrized by a subgroup of $C_2 \times C_2$.
Since for $C_2 = \{1, g \}$ we have only the trivial subgroups, we only have two partial actions of $\k C_2$ on $\k$, namely, $\lambda_{\{1,g\}} = \varepsilon_{\k C_2}$ and $\lambda_{\{1\}}$.
Hence, by Remark \ref{produto_acoes}, we obtain only $4$ partial actions of $\k C_2 \o \k C_2 \simeq \k( C_2 \times C_2)$ on $\k$: $\lambda_{ \{  1 \}} \o \lambda_{\{  1 \}}=\lambda_{ \{  (1, 1 ) \}} $, $\lambda_{ \{  1 \}} \o \varepsilon_{\k C_2} =\lambda_{ \{  (1, 1), ( 1, g) \}} $, $\varepsilon_{\k C_2} \o \lambda_{\{  1 \}}=\lambda_{ \{  (1, 1), (g, 1) \}} $ and $\varepsilon_{\k C_2} \o \varepsilon_{\k C_2} = \varepsilon_{\k (C_2 \times C_2)}.$
However, since $C_2 \times C_2$ has $5$ subgroups, there is a partial action that is not covered by this construction.
Precisely the partial action $\lambda_{ \{  (1, 1), (g, g) \}}.$

This fact can also be observed in the examples given in Subsection \ref{Subsec: 8}.
For instance, we obtain 4 partial actions of $\mathbf{H_{12}}$ on $\k$ using the corresponding partial actions of $\mathcal{A}_{2}$ and $\k C_2$ on $\k$.
Namely, $\varepsilon_{\mathcal{A}_{2}} \o \varepsilon_{\k C_2} = \varepsilon_{\mathbf{H_{12}}}$, $\varepsilon_{\mathcal{A}_{2}} \o \lambda_{\{1\}}^{\k C_{2}} = \lambda_{\{1, g\}}$, $\lambda_{\{1\}}^{\mathcal{A}_{2}} \o \varepsilon_{\k C_2} = \lambda_{\{1, h\}}$ and $\lambda_{\{1\}}^{\mathcal{A}_{2}} \o \lambda_{\{1\}}^{\k C_{2}}  = \lambda_{\{1\}}$.
However, note that the partial action $\lambda_{\{1, gh\}}$ is not obtained using Remark \ref{produto_acoes}.

\medbreak

We use Remark \ref{produto_acoes} to conclude that every Hopf algebra is a $\lambda$-Hopf algebra, where $\lambda$ is a genuine partial action, that is, distinct from the counit, as follows.

\begin{theorem}\label{toda_H_eh_lda}
	Let $H$ be a Hopf algebra.
	Then, there exists a Hopf algebra $L$ and a partial action $\lambda: L \longrightarrow \k$ such that $L_\lambda \simeq H$.
\end{theorem}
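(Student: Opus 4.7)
The plan is to mimic Example \ref{G_eh_lda_Hopf}: realize $H$ as $L_\lda$ by enlarging $H$ with a tensor factor that admits a nontrivial partial action on $\k$ whose associated subalgebra is one-dimensional, so that tensoring with it does not change the resulting algebra up to isomorphism.

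First, I would set $L := H \o \k C_2$, where $C_2 = \langle g \mid g^2 = 1\rangle$, so $L$ is a Hopf algebra as a tensor product of Hopf algebras. Let $\mu : \k C_2 \to \k$ be the partial action $\lda_{\{1\}}$ from Example \ref{lda_kg}, namely $\mu(1) = 1_\k$ and $\mu(g) = 0$. By Remark \ref{produto_acoes}, the linear map $\lda : L \to \k$ defined by $\lda = \varepsilon_H \o \mu$ is a partial action of $L$ on $\k$, and it is genuinely partial since $\mu \neq \varepsilon_{\k C_2}$.

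The next step is to identify $L_\lda$ explicitly. For $h \o c \in L$ with $c \in \{1, g\}$, using that $\Delta_L(h \o c) = (h_1 \o c) \o (h_2 \o c)$, I compute
\begin{align*}
\lda((h \o c)_1)(h \o c)_2
&= \lda(h_1 \o c)\,(h_2 \o c) \\
&= \varepsilon_H(h_1)\mu(c)\,(h_2 \o c) \\
&= \mu(c)\,(h \o c),
\end{align*}
which equals $h \o 1$ when $c = 1$ and vanishes when $c = g$. Hence $L_\lda = H \o \k\{1\}$, and the correspondence $h \o 1 \mapsto h$ furnishes the required algebra isomorphism $L_\lda \simeq H$.

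I do not anticipate any real obstacle: the heart of the argument is this one-line computation, once the partial action on $L$ is supplied by Remark \ref{produto_acoes}. A minor observation worth recording is that the construction is highly non-unique: replacing $\k C_2$ by any $\k C_\ell$ with $\ell \geq 2$, or more generally by any Hopf algebra $K$ equipped with a partial action $\mu$ such that $K_\mu \simeq \k$, gives a different pair $(L, \lda)$ realizing the same $H$, echoing the freedom already observed in Example \ref{G_eh_lda_Hopf}.
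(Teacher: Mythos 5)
Your proposal is correct and is essentially the paper's own proof: the paper takes $L = H \o \k G$ for an arbitrary nontrivial group $G$ with $\lda = \varepsilon_H \o \lda_{\{1_G\}}$ and performs the same one-line computation, so your choice $G = C_2$ together with your closing remark on replacing $C_2$ by other groups reproduces the argument exactly. The only cosmetic point is that the isomorphism $L_\lda = H \o \k\{1\} \simeq H$ should be recorded as an isomorphism of Hopf algebras (which it clearly is, since $H \o \k\{1\}$ is a Hopf subalgebra of $L$), not merely of algebras.
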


\begin{proof}
	Let $G$ be group, $G \neq \{1_G\}$, where $1_G$ denotes the identity element of $G$.
	Consider the partial action of $\k G$ on $\k$ parametrized by the subgroup $\{1_G\}$, that is, $\lambda_{ \{1_G\}} : \k G \longrightarrow \k$ is given by $\lambda_{\{1_G\}}(g) = \delta_{1_G,g}$, that is, $\lambda_{\{1_G\}}(1_G) = 1_\k$ and $\lambda_{\{1_G\}}(g) =0$, for all $g \in G$, $g \neq 1_G$.
	By Remark \ref{produto_acoes}, $\lambda = \varepsilon_H \o \lambda_{\{1_G\}}$ is a partial action of $H \o \k G$ on $\k$.
	Moreover, an easy calculation shows that $(H \o \k G)_{\lambda} = H \o 1_{\k G} \simeq H$.
	Indeed, for $h \o g \in H \o \k G$, we have
	\begin{align*}
		\lambda(( h \o g)_1) (h \o g)_2  = \varepsilon(h_1)\lambda_{\{1_G\}}(g) (h_2 \o g) = h \o \lambda_{\{1_G\}}(g)g  = h \o \delta_{1_G, g} g.
	\end{align*}
	Then, for $L= H \o \k G$ and $\lambda$ as above, it follows that
	$$L_\lambda = \langle \lambda  ( (h \o g)_1 ) (h \o g)_2  \ | \ h \in H, g \in G \rangle =\langle h \o 1_G \ | \ h \in H \rangle \simeq H,$$
	as Hopf algebras.
Hence, $H$ is a $\lambda$-Hopf algebra.	
\end{proof}

\medbreak

The result above is constructive, however, this is not the only way to obtain a Hopf algebra $L$ and a partial action of $L$ on $\k$ such that $L_\lambda \simeq H$.
We shall see in the next subsection that
Taft's algebra is a $\lambda$-Hopf algebra by another way.
Furthermore, since the group $G$ can be chosen arbitrarily in the proof of Theorem \ref{toda_H_eh_lda}, we obtain that $H$ can be embedded into the Hopf algebra $L$, where $dim_\k(L)$ is quite arbitrary, and there exists a partial action $\lambda: L \longrightarrow \k$ such that $L_\lambda \simeq H$.
See Example \ref{G_eh_lda_Hopf}.

\medbreak

To end this subsection, we characterize $\lambda$-Hopf algebras as partial matched pairs of a Hopf algebra and its base field, and then we use this result to produce examples of partial matched pairs.
After, in  Subsection \ref{Subsec:exemplos}, we present the complete classification of such partial matched pairs for pointed non-semisimple Hopf algebras $H$ with $\dim_\k(H)=8,16.$
 See \cite{matchedpair} for details of the theory of partial matched pairs.
\begin{prop}\label{lda_Hopf_e_par_comb}
Let $L$ be a Hopf algebra. Then, $(L,\k)$ is a partial matched pair if and only if $L_\lambda$ is a $\lambda$-Hopf algebra.
\end{prop}
\begin{proof}
	It follows from Theorem \ref{carac} and \cite[Proposition 4.4]{matchedpair}.
\end{proof}

\begin{cor}
		Let $H$ be a Hopf algebra. Then, there exists a Hopf algebra $L$ such that $H$ is the partial matched pair $(L,\k)$. 
\end{cor}
\begin{proof}
	Immediately from Theorem \ref{toda_H_eh_lda} and Proposition \ref{lda_Hopf_e_par_comb}.
\end{proof}

\subsection{Taft's algebra as a $\lambda$-Hopf algebra} \label{Sec: Taft como lambda Hopf}
Throughout this subsection, we assume that $\k$ is an algebraically closed field of characteristic zero.

\medbreak

The Taft's algebra of order $n$, here denoted by $T_n(q)$, is an important Hopf algebra presented by Taft in \cite{Taft}.
We briefly recall its definition.
Let $n \geq 2$ be a positive integer and $q$ a primitive root of unity of order $n$.
As algebra
$T_n(q) = \langle g, \; x,\;  | \; g^{n}=1, \; x^n=0, \; xg = q gx \rangle.$
Thus, $\mathcal{B} = \{g^ix^j  \, | \,  0 \leq i,j \leq n-1  \}$ is the canonical basis for $T_n(q)$, and consequently $dim_\k(T_n(q)) = n^{2}$.
For the coalgebra structure, $g$ is a group-like element and $x$ is a $(1,g)$-primitive element.
In general, we have $\Delta (g^ix^j) = \sum_{\ell=0}^j {j \choose \ell}_{q}  g^{i+\ell}x^{j-\ell} \o g^ix^\ell$.
So, completing the Hopf structure of $T_n(q)$, set $S(g) =g^{n-1}$ and $S(x) = -g^{n-1}x.$
Notice that  $G( T_n(q) ) = \langle g \rangle = \{ 1, g, \cdots, g^{n-1} \} =  C_{n}.$

\medbreak

In the sequel, we present $T_n(q)$ as a $\lambda$-Hopf algebra, where the Hopf algebra $L$ such that $L_\lambda \simeq T_n(q)$ is not simply $T_n(q)$ tensorized by a group algebra. See Theorem \ref{toda_H_eh_lda}.
Aiming this, we recall a family of Hopf algebras, here denoted by $T_n^k(\omega)$, and calculate a suitable partial action $\lambda_k: T_n^k(\omega) \longrightarrow \k$ such that $(T_n^k(\omega))_{\lambda_k} \simeq T_n(\omega^k)$.

\medbreak

The Hopf algebra $T_n^k(\omega)$ is a generalization of the Taft's algebra $T_n(q)$ (see \cite{nicolas_p3} and \cite[Appendix]{counting_arguments}).
Let $k$ and $n$ be positive integers, $n \geq 2$, and $\omega$ a primitive root of unity of order $kn$.
As algebra, $T_n^k(\omega)$ is generated by the letters $g$ and $x$ with the relations $g^{kn}=1,$ $x^n =0$ and $xg = \omega gx$.
Thus, $\mathcal{B} = \{g^ix^j  \, | \,  0 \leq j \leq n-1, \, 0 \leq i \leq kn-1  \}$ is the canonical basis of $T_n^k(\omega)$, and consequently $dim_\k(T_n^k(\omega)) = kn^{2}$.
For the coalgebra structure, $g$ is a group-like element and $x$ is a $(1,g^k)$-primitive element.
In general, $\Delta (g^ix^j) = \sum_{\ell=0}^j {j \choose \ell}_{\omega^k}  g^{i+\ell k}x^{j-\ell} \o g^ix^\ell$.
To complete the Hopf structure of $T_n^k(\omega)$, set $S(g) =g^{kn-1}$ and $S(x) = -g^{kn-k}x.$
Also, note that $G( T_n^k(\omega) ) = \langle g \rangle = \{ 1, g, \cdots, g^{kn-1} \} =  C_{kn}.$

\medbreak

\begin{rem}\label{taft_subalg_taft_general}
	Notice that $T_n^k(\omega)$ generalizes Taft's algebras, since when $k=1$, $T_n^1(\omega)$ is exactly the Taft's algebra of order $n$.
	Moreover, for any positive integer $k$, the Taft's algebra $T_n(q)$ is embedded as a Hopf subalgebra of $T_n^k(\omega)$, where $q=\omega^k$.
	Indeed, consider $T_n(q) = \langle h, \;  y, \; |  \; h^{n}=1, \; y^n=0, \; yh = q hy \rangle,$
	where $q=\omega^k$.
	Then, $\varphi : T_n(q) \longrightarrow T_n^k(\omega)$ given by $\varphi(y)=x$ and $\varphi(h)=g^k$ is an injective homomorphism of Hopf algebras.
\end{rem}

\medbreak

Now, we want to calculate a suitable partial action of $T_n^k(\omega)$ on $\k$.
Consider the subgroup $N= \langle g^k \rangle$ of $G(T_n^k(\omega))$ and the linear map $\lambda_k : T_n^k(\omega) \longrightarrow \k$ given by $\lambda(g^ix^j)=\delta_{j,0}\delta_N(g^i)$ for all $i,j \in \Z$, $j \geq 0$, where the symbol $\delta_{j,0}$ stands for the Kronecker delta and $\delta_N$ defined as in Example \ref{lda_kg}.

We use Proposition \ref{k_mod_alg_parc} to prove that $\lambda_k$ as defined above is a partial action of $T_n^k(\omega)$ on $\k$.
Denotes $\lambda_k$ simply by $\lambda$.
Clearly $\lambda(1)=1_\k$.
Now, it remains only to verify condition \eqref{eqn_lda}, that is, if
$$\lambda(u)\lambda(v)=\lambda(u_1)\lambda(u_2v)$$
holds for all $u, v \in T_n^k(\omega)$.

First, if $u=g^i$, $i \in \Z$, then \eqref{eqn_lda} is satisfied for all $v \in T_n^k(\omega)$.
Indeed, if $v= g^rx^s$, with $r,s \in \Z$, $s\geq 0$, then
$$\lambda(u)\lambda(v) = \lambda(g^i)\lambda(g^rx^s) = \delta_{0,0}\delta_N(g^i)\delta_{s,0}\delta_N(g^r) = \delta_N(g^i)\delta_{s,0}\delta_N(g^r).$$
On the other hand,
\begin{align*}
	\lambda(u_1)\lambda(u_2v) & = \lambda(g^i)\lambda(g^ig^rx^s) = \lambda(g^i)\lambda(g^{i+r}x^s) \\
	& = \delta_{0,0}\delta_N(g^i)\delta_{s,0}\delta_N(g^{i+r}) = \delta_N(g^i)\delta_{s,0}\delta_N(g^{i+r}).
\end{align*}

Thus, if $g^i \notin N$, then $\lambda(g^i)\lambda(g^{i+r}x^s)=0$ and $\delta_N(g^i)\delta_{s,0}\delta_N(g^{i+r})=0$, since $\delta_N(g^i)=0$.
Otherwise, if $g^i \in N$, then $\delta_N(g^r) = \delta_N(g^{i+r})$.
Hence, the equality desired holds.

Now, assume $u=g^ix^j$ with $i,j \in \Z$, $j \geq 1$.
Then, for all $v \in T_n^k(\omega)$, \eqref{eqn_lda} means
$$\lambda(g^ix^j)\lambda(v) = \sum_{\ell=0}^j {j \choose \ell}_{\omega^k}  \lambda(g^{i+\ell k}x^{j-\ell}) \lambda(g^ix^\ell v).$$

Since $j \geq 1$, we get $\lambda(g^ix^j)=\delta_{j,0}\delta_N(g^i) = 0$ and $\lambda(g^{i+\ell k}x^{j-\ell}) = \delta_{j-\ell,0}\delta_N(g^{i+\ell k}) = 0,$ for all $0 \leq \ell \leq (j-1)$.
Then, the previous sum is reduced only when $\ell=j$, and results in
$$0 = {j \choose j}_{\omega^k}  \lambda(g^{i+jk}x^{j-j}) \lambda(g^ix^jv)=\lambda(g^{i+jk}) \lambda(g^ix^jv)$$
for all $v \in T_n^k(\omega)$.

For $v=g^sx^t$ with $s,t \in \Z$, $t \geq 0$, it follows that $j+t \geq 1$ and consequently $\lambda(g^{i+s}x^{j+t})= \delta_{j+t,0}\delta_N(g^{i+s}) = 0$.
Hence,
\begin{align*}
	\lambda(g^{i+jk}) \lambda(g^ix^jv) & = \lambda(g^{i+jk}) \lambda(g^ix^jg^sx^t) = \omega^{js}\lambda(g^{i+jk}) \lambda(g^{i+s}x^{j+t}) =0.
\end{align*}

Thus, \eqref{eqn_lda} holds for all $u,v \in T_n^k(\omega)$, and so $\lambda$ is a partial action of $T_n^k(\omega)$ on $\k$.

Finally, it remains to check that $(T_n^k(\omega))_\lambda = T_n(\omega^k)$.
Recall that $(T_n^k(\omega))_\lambda = \{ \lambda(u_1)u_2 \ | \ u \in T_n^k(\omega)\}$.
Thus, for $u = g^ix^j$, $i,j \in \Z$, $j \geq 0$, we have
$$\lambda((g^ix^j)_1)(g^ix^j)_2 = \sum_{\ell=0}^j {j \choose \ell}_{\omega^k}  \lambda(g^{i+\ell k}x^{j-\ell}) g^ix^\ell.$$

Since $\lambda(g^{i+\ell k}x^{j-\ell}) = \delta_{j-\ell,0}\delta_N(g^{i+\ell k}) = 0$, for any $0 \leq \ell \leq (j-1)$, we conclude that
$\lambda((g^ix^j)_1)(g^ix^j)_2 = {j \choose j}_{\omega^k}  \lambda(g^{i+jk}x^0) g^ix^j=\lambda(g^{i+jk}) g^ix^j = \delta_N(g^{i+jk})g^ix^j,$
for all $i,j \in \Z$, $j \geq 0$.

As $g^{jk} \in N$, it follows that $\delta_N(g^{i+jk}) =\delta_N(g^{i})$ and consequently
$\lambda((g^ix^j)_1)(g^ix^j)_2 =  \delta_N(g^{i}) g^ix^j$.
Therefore,
\begin{align*}
	(T_n^k(\omega))_\lambda & =\{ \lambda(u_1)u_2 \ | \ u \in T_n^k(\omega)\} \\
	& = \langle \lambda(u_1)u_2 \ | \ u \in \mathcal{B} \rangle \\
	& = \langle \lambda((g^ix^j)_1)(g^ix^j)_2 \ | \ 0 \leq  i \leq (kn-1), 0 \leq j \leq (n-1) \rangle \\
	& = \langle \delta_N(g^{i}) g^ix^j \ | \ 0 \leq  i \leq (kn-1), 0 \leq j \leq (n-1) \rangle \\
	& = \langle \delta_N(g^{rk+s}) g^{rk+s}x^j \ | \ 0 \leq s \leq (k-1), 0 \leq r, j \leq (n-1) \rangle \\
	& = \langle \delta_N(g^{rk}) g^{rk}x^j \ | \ 0 \leq  r,j \leq (n-1) \rangle \\
	& = \langle  g^{rk}x^j \ | \ 0 \leq  r, j \leq (n-1) \rangle \\
	& = \varphi(T_n(\omega^k)) \\
	& \simeq T_n(\omega^k),
\end{align*}
where $\varphi$ is the injective homomorphism of Hopf algebras given in Remark \ref{taft_subalg_taft_general}.

\medbreak

	We observe that the choice of the parameter $k$ was arbitrary.
	Thus, the dimension of $T_n^k(\omega)$ can be chosen as large as desired.
	This situation is the same as for group algebras (Example \ref{grupo_eh_Hlda}) and also in Theorem \ref{toda_H_eh_lda}.
	
	\medbreak
	
\begin{exa}
	Consider the Sweedler's Hopf algebra $\mathbb{H}_4 = T_2(-1)$.
	Note that $T_2^2(q)= \mathcal{A}^{'''}_{4,q}$ and $T_2^4(\tilde{q})=\mathbf{H_{19}}$, where $q$ and $\tilde{q}$ are primitive $4^{th}, 8^{th}$ roots of unity, respectively (see Subsection \ref{Subsec: 8}).
	Then, the partial actions $\lambda_2 = \lambda_{\{1, g^2\}}$ of $\mathcal{A}^{'''}_{4,q}$ on $\k$ and $\lambda_4= \lambda_{\{1, g^4\}}$  of $\mathbf{H_{19}}$ on $\k$, are such that $(\mathcal{A}^{'''}_{4,q})_{\lambda_2} \simeq \mathbb{H}_4 \simeq (\mathbf{H_{19}})_{\lambda_4}$.
\end{exa}

\subsection{Examples of $\lambda$-Hopf algebras} \label{Subsec:exemplos}
In this subsection, we show, schematically, the $\lambda$-Hopf algebras obtained with the partial actions of each pointed non-semisimple Hopf algebra $H$ with $dim_\k(H)=8,16$ (see Subsection \ref{Subsec: 8}).
Thus, by Proposition \ref{lda_Hopf_e_par_comb}, we obtain all partial matched pairs $(H, \k)$, for each one of these Hopf algebras.

\medbreak

\begin{theorem} Let $H$ be one of the following pointed non-semisimple Hopf algebras:
	\begin{itemize}
		\item \underline{Dimension 8:} $\mathcal{A}_2, \mathcal{A}^{'}_4$ or $\mathcal{A}^{''}_4$;
		\item \underline{Dimension 16:} $\mathbf{H_{1}}$, $\mathbf{H_{4}},$ $\mathbf{H_{5}},$ $\mathbf{H_{6}},$ $\mathbf{H_{7}},$ $ \mathbf{H_{8}},$ $\mathbf{H_{9}},$ $\mathbf{H_{10}},$ $\mathbf{H_{11}},$ $\mathbf{H_{14}},$ $\mathbf{H_{15}},$ $\mathbf{H_{16}},$ $\mathbf{H_{18}}$ or $\mathbf{H_{22}}$.
	\end{itemize}
	Then, $H$ has not a partial action $\lambda$, $\lambda \neq \varepsilon$, such that $H_\lambda$ is a Hopf subalgebra of $H$.
\end{theorem}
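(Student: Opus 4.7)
The plan is to reduce the statement to a uniform invocation of Corollary \ref{nao_H_lda}. For every Hopf algebra $H$ in the listed family and every non-counit partial action $\lda:H\to\k$ exhibited in Theorems \ref{teo_lda_dim8} and \ref{teo_lda_dim16}, I will produce a non-trivial skew-primitive element $x\in P_{g,h}(H)$ with $\lda(g)\neq\lda(h)$; Corollary \ref{nao_H_lda} then immediately gives that $H_\lda$ is not a Hopf subalgebra of $H$.

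The guiding observation is that in every algebra on the list the given presentation already supplies skew-primitive generators $x$ (and, where present, $y, z$) lying in spaces $P_{1,t}(H)$ for certain $t \in G(H)$. Since any non-counit partial action $\lda$ has initial condition a proper subgroup $N \subsetneq G(H)$, it suffices to check that for each such $N$ at least one of the distinguished targets $t$ satisfies $t\notin N$; the corresponding generator $x\in P_{1,t}(H)$ then fulfills $\lda(1)=1\neq 0=\lda(t)$. Non-triviality of the chosen $x$ (i.e.\ $x\notin\k\{1-t\}$) is automatic, since these are basis elements independent from $\k G(H)$.

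The case analysis splits along the structure of $G(H)$. When $G(H)$ is cyclic -- which covers $\mathcal{A}_2,\mathcal{A}_4',\mathcal{A}_4''$ in dimension $8$ and $\mathbf{H_1},\mathbf{H_4},\mathbf{H_5},\mathbf{H_6},\mathbf{H_{10}},\mathbf{H_{11}},\mathbf{H_{18}},\mathbf{H_{22}}$ in dimension $16$ -- the listed generator $g$ avoids every proper subgroup of $\langle g\rangle$, so a skew-primitive generator $x\in P_{1,g}$ always does the job. The same argument handles $\mathbf{H_7},\mathbf{H_8},\mathbf{H_9}$, where $x\in P_{1,g}$ and $y\in P_{1,g^3}$: any proper $N\subsetneq\langle g\rangle$ omits $g$, so $x$ works. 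For $\mathbf{H_{14}},\mathbf{H_{15}},\mathbf{H_{16}}$, whose group-like subgroup is $G(H)=\langle g,h\rangle\cong C_2\times C_2$ with skew-primitive generators $x\in P_{1,g}$ and $y\in P_{1,h}$, every proper subgroup $N\subsetneq G(H)$ excludes at least one of $g,h$, so either $x$ or $y$ provides the required witness.

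No step poses a genuine technical obstacle; the substance lies in the careful tabulation of cases, whose completeness is exactly what needs to be monitored. This is precisely why certain close relatives (such as $\mathbf{H_2},\mathbf{H_3},\mathcal{A}_{4,q}''',\mathcal{A}_{2,2}$) do not appear in the list: for them, some non-counit partial action $\lda$ satisfies $\lda(g)=\lda(h)$ for every relevant skew-primitive space $P_{g,h}$, and Corollary \ref{nao_H_lda} no longer applies -- indeed, in those cases $H_\lda$ can actually be a Hopf subalgebra, so no general impossibility result is available.
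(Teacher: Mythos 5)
Your proposal is correct and follows essentially the same route as the paper: both arguments exhibit, for each non-counit partial action $\lda$ in the classification of Theorems \ref{teo_lda_dim8} and \ref{teo_lda_dim16}, a non-trivial skew-primitive generator $x\in P_{1,t}(H)$ with $t\notin N$ (so $\lda(1)\neq\lda(t)$) and then invoke Corollary \ref{nao_H_lda}; your organization by the subgroup lattice of $G(H)$, with $\mathbf{H_{14}},\mathbf{H_{15}},\mathbf{H_{16}}$ handled via the second generator $y\in P_{1,h}$, matches the paper's treatment of the exceptional cases $\lda_{\{1,g\}}$.
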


\begin{proof}
	Note that for these Hopf algebras, there exists a $(1,g)$-primitive element $x$.
	Almost all partial actions $\lambda (\neq \varepsilon)$ of these Hopf algebras are such that $\lambda(g)=0$.
	The only exceptions are $\lambda_{\{1, g\}}$ of $\mathbf{H_{i}}$, $i \in \{14, 15\}$.
	For these two specific cases, there exists $y \in \mathbf{H_i}$ such that $y$ is a $(1,h)$-primitive element and $\lambda_{\{1, g\}}(h)=0$.
	Thus, by Corollary \ref{nao_H_lda}, it follows that $H_\lambda$ is not a Hopf algebra of $H$.
\end{proof}

\medbreak

In the following diagram, an arrow from $H$ to $B$, $H \stackrel{\lambda}{\longrightarrow}B$, means that $\lambda$ is a partial action of $H$ on $\k$ such that $B \simeq H_\lambda$, as Hopf algebras.
At this point, Corollary \ref{nao_H_lda} is useful to discard a lot of partial actions $\lambda$ of $H$ on $\k$, since the existence of a $(g,h)$-primitive element such that $\lambda(g) \neq \lambda(h)$ implies that $H_\lambda$ is not a Hopf subalgebra of $H$.
To verify whether $H_\lambda$ is or not a Hopf subalgebra of $H$, one uses Theorem \ref{carac} or, which is usually easier, Proposition \ref{cond_mais_forte}.

$$
\xymatrix{
	\underline{\textrm{dimension} \, \, 16}
	& &	\underline{\textrm{dimension} \, \, 8} & &	\underline{\textrm{dimension} \, \, 4}
	\\
	\mathbf{H_{2}} \ar@/^1pc/[ddrr]^{\lambda_{\{1,g^2\}}} & &
	\\
	\mathbf{H_{3}} \ar[drr]^{\lambda_{\{1,g^2\}}} & &
	\\
	\mathbf{H_{13}} \ar[rr]^{\lambda_{\{1,g\}}}  & & \mathcal{A}_{2} & &
	\\
	\mathbf{H_{12}} = \mathcal{A}_2 \o \k C_2 \ar@/_/[urr]_{\,\,\, \quad\lambda_{\{1,g\}} = \varepsilon_{\mathcal{A}_2} \o \lambda_{\{1\}}^{\k C_2}}  & &
%
	\\
	\mathbf{H_{20}} \ar@/^/[drr]^{\quad \lambda_{\{1,g^2,g^4,g^6\}}} & & & &
	\\
	\mathbf{H_{21}} \ar[rr]^{\lambda_{\{1,g^2,g^4,g^6\}}}  & & \mathcal{A}_{4}'  & &
	\\
	\mathbf{H_{23}} \ar@/_/[urr]|-{\lambda_{\{1,g,g^2,g^3\}}} & & & &
	\\
	\mathbf{H_{24}} \ar@/_1pc/[uurr]_{\quad \lambda_{\{1,g^2,gh,g^3h\}}} & &  & &
		\\
	\mathbf{H_{28}} \ar[rr]^{\lambda_{\{1,g,g^2,g^3\}}} & & \mathcal{A}_{4}''  & &
	\\
	\mathbf{H_{29}}  \ar@/_/[urr]|-{\lambda_{\{1,g^2,gh,g^3h\}}} & &
	\\
	\mathbf{H_{27}} \ar[drr]|-{\quad \lambda_{\{1,g^2,h, g^2h\}}} \ar@/^3pc/[ddrrrr]|-{\lambda_{\{1,g^2h\}}}  & &  & &
	\\
	\mathbf{H_{17}} \ar@/^/[rr]|-{\lambda_{\{1, g, a, ga\}}} \ar[rr]|-{\lambda_{\{1, g, ab, gab\}}}  \ar@/_/[rr]|-{\lambda_{\{1, g, b, gb\}}}  \ar[rrrrd]|-{\lambda_{\{1,g\}}}  & &  \mathcal{A}_{2,2} \ar[drr]|-{\lambda_{\{1,g\}}}  & &
	\\
	\mathbf{H_{26}} \ar[urr]|-<<<<<<<<<<<<<<{\lambda_{\{1,g^2,h, g^2h\}}} \ar[rrrr]|-{\lambda_{\{1, h\}}}   & &  & & \mathbb{H}_4
	\\
	\mathbf{H_{25}}  \ar[uurr]|-<<<<<<<<<<<<{\lambda_{\{1,g^2,h,g^2h\}}} \ar[rr]|-{\lambda_{\{1,g,g^2,g^3\}}} \ar[urrrr]|-{\lambda_{\{1,g^2\}}} \ar@/_/[rr]|-{\lambda_{\{1,g^2,gh,g^3h\}}} & & \mathcal{A}_{4,q}'''  \ar[urr]|-{\lambda_{\{1,g^2\}}} & &
	\\
	\mathbf{H_{19}}   \ar[urr]|-{\lambda_{\{1,g^2,g^4, g^6\}}} \ar@/_3pc/[uurrrr]|-{\lambda_{\{1,g^4\}}} & & & &
}
$$

\subsection*{Acknowledgments} 
The authors would like to thank the referee for the corrections and suggestions provided.

\bibliographystyle{abbrv}

\end{document}